\documentclass[12pt]{amsart}
\usepackage{amssymb}
\usepackage{amsthm}
\usepackage{enumerate}
\usepackage[dvips]{graphics,color}

\theoremstyle{plain}
\newtheorem{Thm}{Theorem}[section]
\newtheorem{Lem}[Thm]{Lemma}

\theoremstyle{remark}

\baselineskip 26pt
\small\normalsize
\textwidth 15cm \textheight 20cm \topmargin 0.5cm \oddsidemargin
-0.1cm \evensidemargin 0.1cm \errorcontextlines=0
\numberwithin{equation}{section}

\newcommand{\C}{{\mathbb C}}

\newcommand{\R}{{\mathbb R}}
\newcommand{\D}{{\mathbb D}}

\begin{document}

\title{Integral means of the derivatives of Blaschke products}

\address{Universit\'e de Lyon; Universit\'e Lyon 1; Institut Camille Jordan CNRS UMR 5208; 43, boulevard du 11 Novembre 1918, F-69622 Villeurbanne.}
\email{fricain@math.univ-lyon1.fr}

\author{Emmanuel Fricain, Javad Mashreghi}
\address{D\'epartement de math\'ematiques et de statistique,
         Universit\'e Laval,
         Qu\'ebec, QC,
         Canada G1K 7P4.}
\email{Javad.Mashreghi@mat.ulaval.ca}

\thanks{This work was supported by NSERC (Canada) and FQRNT (Qu\'ebec). A part of this work was done while the first author was visiting McGill University. He would like to thank this institution for its warm hospitality.}

\keywords{Blaschke products, model space}

\subjclass[2000]{Primary: 30D50, Secondary: 32A70}

\begin{abstract}
We study the rate of growth of some integral means of the
derivatives of a Blaschke product and we generalize several
classical results. Moreover, we obtain the rate of growth of
integral means of the derivative of functions in the model subspace
$K_B$ generated by the Blaschke product $B$.
\end{abstract}

\maketitle

\section{Introduction}
Let $(z_n)_{n \geq 1}$ be a sequence in the unit disc satisfying the
Blaschke condition
\begin{equation} \label{E:blascon}
\sum_{n=1}^{\infty} (1-|z_n|) < \infty.
\end{equation}
Then, the product
\[
B(z) = \prod_{n=1}^{\infty} \frac{|z_n|}{z_n} \,\,
\frac{z_n-z}{1-\bar{z}_n \, z}
\]
is a bounded analytic function on the unit disc $\D$ with zeros only
at the points $z_n$, $n \geq 1$, \cite[page 20]{pD70}. Since the
product converges uniformly on compact subsets of $\D$, the
logarithmic derivative of $B$ is given by
\[
\frac{B'(z)}{B(z)} = \sum_{n=1}^{\infty}
\frac{1-|z_n|^2}{(1-\bar{z}_n \, z)(z-z_n)}, \hspace{1cm} (z \in
\D).
\]
Therefore,
\begin{equation} \label{E:estb'1}
|B'(re^{i\theta})| \leq \sum_{n=1}^{\infty} \frac{1-|z_n|^2}{|\,
1-\bar{z}_n \, re^{i\theta} \,|^2}, \hspace{1cm} (re^{i\theta} \in
\D).
\end{equation}
If (\ref{E:blascon}) is the only restriction we put on the zeros of
$B$, we can only say that
\begin{eqnarray*}
\int_{0}^{2\pi} |B'(re^{i\theta})| \, d\theta &\leq&
\sum_{n=1}^{\infty} (1-|z_n|^2) \, \int_{0}^{2\pi}
\frac{d\theta}{|\,
1-\bar{z}_n \, re^{i\theta} \,|^2}\\
&=& \sum_{n=1}^{\infty} (1-|z_n|^2) \,
\frac{2\pi}{(1-|z_n|^2r^2)}\\
&\leq&  \frac{4\pi\sum_{n=1}^{\infty} (1-|z_n|) }{(1-r)},
\end{eqnarray*}
which implies
\begin{equation} \label{E:norminfbp}
\int_{0}^{2\pi} |B'(re^{i\theta})| \, d\theta = \frac{o(1)}{1-r},
\hspace{1cm} (r \to 1).
\end{equation}
However, assuming stronger restrictions on the rate of increase of
the zeros of $B$ give us more precise estimates about the rate of
increase of integral means of $B_r'$ as $r \to 1$. The most common
restriction is
\begin{equation} \label{E:blasconal}
\sum_{n=1}^{\infty} (1-|z_n|)^\alpha < \infty
\end{equation}
for some $\alpha \in (0,1)$.  Protas \cite{Pr73} took the first step
in this direction by proving the following results.

Let us mention that $H^p$, $0<p<\infty$, stands for the classical
Hardy space equipped with the norm
\[
\|f\|_p = \lim_{r \to 1} \bigg(\, \int_{0}^{2\pi}
|f(re^{i\theta})|^p \,\,\, \frac{d\theta}{2\pi}
\,\bigg)^{\frac{1}{p}},
\]
and its cousin $A_\gamma^p$, $0<p<\infty$ and $\gamma > -1$, stands
for the (weighted) Bergman space equipped with the norm
\[
\|f\|_{p,\gamma} = \bigg(\,   \int_{0}^{1}\int_{0}^{2\pi}
|f(re^{i\theta})|^p \,\,\, \frac{r(1-r^2)^\gamma dr \,\,
d\theta}{\pi/(1+\gamma)} \,\bigg)^{\frac{1}{p}}.
\]

\begin{Thm}[Protas] \label{T:prot}
If $0 < \alpha < \frac{1}{2}$ and the Blaschke sequence $(z_n)_{n
\geq 1}$ satisfies (\ref{E:blasconal}), then $B' \in H^{1-\alpha}$.
\end{Thm}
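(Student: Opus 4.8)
The plan is to bound the $H^{1-\alpha}$ integral mean of $B'$ directly, starting from the pointwise estimate \eqref{E:estb'1}. Write $p = 1-\alpha$, so that $0 < p < 1$ since $\alpha \in (0,\tfrac12)$. The key structural fact I want to exploit is the elementary inequality for sums: when $0 < p < 1$, one has $\left(\sum_n a_n\right)^p \leq \sum_n a_n^p$ for nonnegative terms $a_n$. Applying this to \eqref{E:estb'1} with $a_n = (1-|z_n|^2)/|1-\bar z_n re^{i\theta}|^2$ converts the troublesome $p$-th power of a sum into a sum of $p$-th powers, which is exactly what makes the estimate tractable term by term.

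After this reduction, the integral mean becomes
\[
\int_0^{2\pi} |B'(re^{i\theta})|^p \, d\theta \;\leq\; \sum_{n=1}^{\infty} (1-|z_n|^2)^p \int_0^{2\pi} \frac{d\theta}{|1-\bar z_n \, re^{i\theta}|^{2p}}.
\]
So the next step is to estimate the inner integral $\int_0^{2\pi} |1-\bar z_n re^{i\theta}|^{-2p}\,d\theta$. This is a standard integral whose growth as the modulus of the relevant point tends to $1$ is controlled by $(1-|z_n|r)^{1-2p}$ precisely when $2p > 1$, i.e. when $p > \tfrac12$, equivalently $\alpha < \tfrac12$ — which is exactly the hypothesis. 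I would cite or quickly verify the asymptotic $\int_0^{2\pi} |1-\rho e^{i\theta}|^{-s}\,d\theta \asymp (1-\rho)^{1-s}$ for $s > 1$ as $\rho \to 1$, applied with $s = 2p$ and $\rho = |z_n|r \leq |z_n|$.

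Substituting this bound yields
\[
\int_0^{2\pi} |B'(re^{i\theta})|^p \, d\theta \;\leq\; C \sum_{n=1}^{\infty} (1-|z_n|^2)^p \, (1-|z_n|r)^{1-2p},
\]
and since $1 - |z_n|r \geq 1 - |z_n|$ and $(1-|z_n|^2) \leq 2(1-|z_n|)$, the exponents combine as $p + (1-2p) = 1-p = \alpha$. Thus the right-hand side is dominated by $C' \sum_n (1-|z_n|)^{\alpha}$, which is finite by the hypothesis \eqref{E:blasconal}. Crucially this bound is uniform in $r$, so taking the supremum over $r \in (0,1)$ shows $\|B'\|_p < \infty$, giving $B' \in H^{1-\alpha}$.

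The main obstacle I anticipate is the correct handling of the inner integral and the role of the condition $2p > 1$: the estimate $(1-\rho)^{1-s}$ only holds in the regime $s > 1$, and it is precisely the requirement $2p > 1 \Leftrightarrow \alpha < \tfrac12$ that makes the exponent $1-2p$ negative but integrable, so the zeros still contribute a convergent sum. For $\alpha \geq \tfrac12$ the inner integral would be bounded or only logarithmically divergent and this scheme would produce a different (and in fact weaker) conclusion, which is consistent with the theorem's hypothesis. The other place to be careful is the subadditivity step, which is valid only because $p < 1$; this is what fails in the $L^p$ range $p \geq 1$ and explains why the result lives in a fractional Hardy space.
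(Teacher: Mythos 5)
Your argument is correct and is essentially the same route the paper uses to recover Protas's theorem, namely the special case $h(t)=t^{\alpha}$, $p=q=1-\alpha$ of Theorem \ref{T:caseles12}: subadditivity of $t\mapsto t^{p}$ for $0<p<1$ applied to (\ref{E:estb'1}), the standard estimate $\int_0^{2\pi}|1-\rho e^{i\theta}|^{-2p}\,d\theta\lesssim(1-\rho)^{1-2p}$ valid since $2p>1$, and a termwise summation against the hypothesis (\ref{E:blasconal}). The only (harmless) difference is that where the paper invokes Lemma \ref{L:estim}, you use the trivial bound $1-r|z_n|\geq 1-|z_n|$, which suffices here because the exponents combine to exactly $\alpha$ and yield a bound uniform in $r$.
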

\begin{Thm}[Protas] \label{T:prot2}
If $0 < \alpha < 1$ and the Blaschke sequence $(z_n)_{n \geq 1}$
satisfies (\ref{E:blasconal}), then $B' \in A_{\alpha-1}^{1}$.
\end{Thm}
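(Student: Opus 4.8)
The plan is to bound the weighted integral directly from the pointwise estimate (\ref{E:estb'1}). Up to the harmless constant $\alpha/\pi$, the quantity $\|B'\|_{1,\alpha-1}$ equals $\int_0^1\int_0^{2\pi}|B'(re^{i\theta})|\,(1-r^2)^{\alpha-1}\,r\,dr\,d\theta$, so it suffices to show this is finite. Since every term in (\ref{E:estb'1}) is non-negative, Tonelli's theorem lets me interchange the sum with the double integral, and I would start from
\[
\|B'\|_{1,\alpha-1} \leq \frac{\alpha}{\pi}\sum_{n=1}^{\infty}(1-|z_n|^2)\int_0^1\!\!\int_0^{2\pi}\frac{(1-r^2)^{\alpha-1}\,r}{|\,1-\bar z_n\,re^{i\theta}\,|^2}\,d\theta\,dr.
\]

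Next I would perform the angular integration in closed form. For fixed $r$ the point $w=\bar z_n r$ satisfies $|w|<1$, so the standard Poisson-type evaluation gives $\int_0^{2\pi} d\theta/|1-we^{i\theta}|^2 = 2\pi/(1-|w|^2)$, exactly as in the computation leading to (\ref{E:norminfbp}). This collapses the problem onto the radial integrals
\[
I_n := \int_0^1 \frac{(1-r^2)^{\alpha-1}\,r}{1-|z_n|^2 r^2}\,dr,
\]
and the theorem will follow once I establish that $\sum_n (1-|z_n|^2)\,I_n<\infty$.

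The heart of the matter, and the step I expect to be the main obstacle, is the uniform estimate $I_n \leq C_\alpha\,(1-|z_n|^2)^{\alpha-1}$ with $C_\alpha$ independent of $n$. I would obtain it by the substitution $u=r^2$ followed by $t=1-u$, which rewrites $I_n$ as $\tfrac12\int_0^1 t^{\alpha-1}/(b+a t)\,dt$, where $a=|z_n|^2$ and $b=1-|z_n|^2$. When $a\leq\tfrac12$ the denominator stays bounded below by $b$, and the integral is trivially $O(b^{\alpha-1})$; when $a>\tfrac12$ I would split at $t=b$, bounding the denominator below by $b$ on $(0,b)$ and by $at$ on $(b,1)$, which gives $b^{\alpha-1}/\alpha$ for the first piece and, crucially using $\alpha<1$ so that $\int_b^1 t^{\alpha-2}\,dt$ is dominated by $b^{\alpha-1}/(1-\alpha)$, a constant multiple of $b^{\alpha-1}$ for the second. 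Granting this estimate, each summand is controlled by $(1-|z_n|^2)\cdot(1-|z_n|^2)^{\alpha-1}=(1-|z_n|^2)^{\alpha}\leq 2^{\alpha}(1-|z_n|)^{\alpha}$, so the series converges precisely because of hypothesis (\ref{E:blasconal}). This completes the argument.
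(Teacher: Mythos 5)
Your argument is correct: the exact angular integral $\int_0^{2\pi}|1-we^{i\theta}|^{-2}\,d\theta=2\pi/(1-|w|^2)$, the radial estimate $I_n\leq C_\alpha(1-|z_n|^2)^{\alpha-1}$ (both of your cases check out, the point being that $\alpha<1$ makes $\int_b^1 t^{\alpha-2}\,dt\lesssim b^{\alpha-1}$ while $\alpha>0$ controls the piece near $t=0$), and the final summation via $(1-|z_n|^2)^\alpha\leq 2^\alpha(1-|z_n|)^\alpha$ are all sound. The paper quotes this theorem from Protas without proof, but your route --- the pointwise bound (\ref{E:estb'1}), Tonelli, and the weighted-kernel estimate $\int_0^1\int_0^{2\pi}(1-\rho^2)^{\alpha-1}|1-\bar z_n\rho e^{i\theta}|^{-2}\rho\,d\rho\,d\theta\asymp(1-|z_n|)^{\alpha-1}$ --- is essentially the same mechanism the paper uses for its generalization (Theorem \ref{T:caseles12-berg} with $h(t)=t^\alpha$, $q=1$, $\gamma=\alpha-1$, Case III), the only differences being that you derive the kernel estimate from scratch rather than citing \cite{hkz} and sum the series directly rather than through Lemma \ref{L:estim}.
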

\noindent Then, Ahern and Clark \cite{ak74} showed that Theorem
\ref{T:prot} is sharp in the sense that $B'$ need not lie in any
$H^p$ with $p > 1-\alpha$. Later on, they also showed that the
condition $\sum_{n=1}^{\infty} (1-|z_n|)^{1/2} < \infty$ is not
enough to imply that $B' \in H^{1/2}$ \cite{ak76}. At the same time,
Linden \cite{Li76} generalized Theorem \ref{T:prot} for higher
derivatives of $B$. In the converse direction, Ahern and Clark
\cite{ak74}  also obtained the following result.

\begin{Thm}[Ahern--Clark] \label{T:AhernClark}
If $\frac{1}{2} < p < 1$, then there is a Blaschke product $B$ with
$B' \in H^p$, and such that its zeros satisfies
\[
\sum_{n=1}^{\infty} (1-|z_n|)^\alpha = \infty
\]
for all $\alpha$ with $0 < \alpha < (1-p)$.
\end{Thm}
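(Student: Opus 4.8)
The plan is to prove this by an explicit construction and to outsource the only hard analytic input to Protas's Theorem \ref{T:prot}, which is already at our disposal. The key observation is that Theorem \ref{T:prot}, applied with the single exponent $\alpha_0 = 1-p$, produces $B' \in H^{1-\alpha_0} = H^{p}$ as soon as $\sum_n (1-|z_n|)^{\alpha_0} < \infty$; moreover the restriction $\alpha_0 < \tfrac12$ in that theorem is exactly the hypothesis $p > \tfrac12$. Thus it suffices to exhibit a Blaschke sequence whose modulus data $d_n := 1-|z_n|$ converges at the exponent $1-p$ but diverges for every strictly smaller exponent.

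First I would reduce everything to a statement about positive sequences. Writing $c_n = d_n^{\,1-p}$, the two desiderata $\sum_n d_n^{\,1-p} < \infty$ and $\sum_n d_n^{\,\alpha} = \infty$ for all $0<\alpha<1-p$ become, via the substitution $\beta = \alpha/(1-p)$, the requirements $\sum_n c_n < \infty$ and $\sum_n c_n^{\,\beta} = \infty$ for every $\beta \in (0,1)$. A summable positive sequence all of whose sub-unit powers diverge is classical: I would take $c_n = 1/\!\left(n(\log n)^2\right)$ for $n \ge 2$, so that $\sum_n c_n < \infty$ while $\sum_n c_n^{\,\beta} = \sum_n n^{-\beta}(\log n)^{-2\beta} = \infty$ for each $\beta < 1$, the factor $n^{-\beta}$ with $\beta<1$ already forcing divergence no matter the logarithmic correction.

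With the sequence in hand I would set $d_n = \left(n(\log n)^2\right)^{-1/(1-p)}$ and place the zeros on a ray, say $z_n = 1 - d_n$ (any placement with the prescribed moduli works, since both the hypotheses and the conclusion involve only $|z_n|$). Since $d_n < 1$ and $1-p < 1$ we have $d_n \le d_n^{\,1-p}$, whence $\sum_n d_n \le \sum_n d_n^{\,1-p} < \infty$ and the Blaschke condition (\ref{E:blascon}) holds; let $B$ be the associated Blaschke product. Theorem \ref{T:prot}, applied with $\alpha = 1-p \in (0,\tfrac12)$ and the verified convergence $\sum_n (1-|z_n|)^{1-p} < \infty$, yields $B' \in H^{1-(1-p)} = H^{p}$. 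Finally, for every $\alpha$ with $0 < \alpha < 1-p$ we get $\sum_n (1-|z_n|)^{\alpha} = \sum_n c_n^{\,\alpha/(1-p)} = \infty$ by the construction of $(c_n)$, which is the remaining assertion.

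The main obstacle is therefore not any delicate estimate on $B'$ itself — that is entirely absorbed into Theorem \ref{T:prot} — but rather the precise calibration of the zero set: one must find a sequence whose critical convergence exponent is \emph{exactly} $1-p$, sitting on the boundary of summability at $1-p$ while failing summability for every strictly smaller exponent. The logarithmic correction $(\log n)^{-2/(1-p)}$ is what achieves this sharp calibration; a bare power $d_n \asymp n^{-1/(1-p)}$ would already give $\sum_n d_n^{\,1-p} = \sum_n 1/n = \infty$ and so would not meet the hypothesis of Protas's theorem. It is worth recording that the assumption $\tfrac12 < p < 1$ is used in precisely one place, namely to place $\alpha_0 = 1-p$ inside the admissible range $(0,\tfrac12)$ of Theorem \ref{T:prot}.
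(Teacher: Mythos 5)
The paper does not actually prove this theorem: it is quoted from Ahern and Clark \cite{ak74} as background, so there is no internal proof to compare yours against. Taken at face value, your argument is correct. With $d_n=(n(\log n)^2)^{-1/(1-p)}$ one has $\sum_n d_n^{1-p}=\sum_n 1/(n(\log n)^2)<\infty$, Theorem \ref{T:prot} applies with exponent $1-p\in(0,1/2)$ to give $B'\in H^p$, and $\sum_n d_n^{\alpha}=\sum_n n^{-\alpha/(1-p)}(\log n)^{-2\alpha/(1-p)}$ diverges for every $\alpha<1-p$ because $\alpha/(1-p)<1$. The only cosmetic repair needed is at the start of the sequence: $2(\log 2)^2<1$, so $c_2>1$ and hence $d_2>1$, which would put $|z_2|<0$; begin at $n=3$ or discard finitely many terms.

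That said, you should be uneasy that the statement has collapsed into a two-line corollary of Theorem \ref{T:prot} --- a theorem Ahern and Clark knew and were responding to. The reason is that your zero sequence satisfies $\sum_n(1-|z_n|)^{1-p}<\infty$, i.e.\ it obeys Protas's hypothesis at the critical exponent, so the membership $B'\in H^p$ costs nothing; all your construction does is calibrate the subcritical divergence. The substance of the Ahern--Clark result, and the reason the paper presents it as a statement ``in the converse direction,'' is that $B'\in H^p$ can hold even when the zeros fail condition (\ref{E:blasconal}) at $\alpha=1-p$ itself; exhibiting such a product requires estimating $B'$ directly, since Protas's theorem is then unavailable at the relevant exponent. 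Your route is structurally incapable of reaching that stronger conclusion, precisely because it manufactures $B'\in H^p$ by arranging convergence at $1-p$. In short: a valid proof of the statement as literally transcribed, but not of the sharper fact the citation is standing in for, and you should flag that distinction rather than let the strict inequality $\alpha<1-p$ absorb it silently.
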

\noindent However, Cohn \cite{Co83} proved that for interpolating
sequences the two conditions are equivalent.

\begin{Thm}[Cohn] \label{T:cohn}
Let $0 < \alpha < \frac{1}{2}$, and let $(z_n)_{n \geq 1}$ be an
interpolating Blaschke sequence. Then, $B' \in H^{1-\alpha}$ if and
only if $(z_n)_{n \geq 1}$ satisfies (\ref{E:blasconal}).
\end{Thm}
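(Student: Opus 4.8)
The ``if'' implication requires no interpolation hypothesis: it is exactly Theorem~\ref{T:prot} of Protas (with $p=1-\alpha$). So the whole content is the converse, and the plan is to show that for a uniformly separated (interpolating) sequence one has the two--sided comparison
\[
\int_0^{2\pi}|B'(e^{i\theta})|^{\,p}\,d\theta \;\asymp\; \sum_{n\ge1}(1-|z_n|)^{1-p},\qquad p=1-\alpha\in(\tfrac12,1),
\]
whose lower half forces $\sum(1-|z_n|)^\alpha<\infty$ once $B'\in H^p$. The enabling first step is an exact boundary formula. Writing $b_n$ for the $n$-th factor and using $(e^{i\theta}-z_n)(1-\bar z_n e^{i\theta})=e^{i\theta}|e^{i\theta}-z_n|^2$ together with $|B(e^{i\theta})|=1$ a.e., the logarithmic derivative collapses without any cancellation:
\[
|B'(e^{i\theta})|=\Big|\frac{B'}{B}(e^{i\theta})\Big|
=\Big|\,e^{-i\theta}\sum_{n\ge1}\frac{1-|z_n|^2}{|e^{i\theta}-z_n|^2}\,\Big|
=\sum_{n\ge1}\frac{1-|z_n|^2}{|e^{i\theta}-z_n|^2}
=\sum_{n\ge1}P_{z_n}(\theta),
\]
where $P_{z_n}$ is the Poisson kernel at $z_n$. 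Thus $|B'|$ on $\T$ is simply a superposition of Poisson kernels located at the zeros, and the problem becomes an $L^p$ estimate for such a sum.

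Given this, one half is soft. Since $0<p<1$, subadditivity $(\sum a_n)^p\le\sum a_n^p$ together with the elementary computation $\int_0^{2\pi}P_{z_n}(\theta)^p\,d\theta\asymp(1-|z_n|)^{1-p}$ (the integral converging \emph{precisely} because $p>\tfrac12$) gives $\|B'\|_p^p\lesssim\sum(1-|z_n|)^\alpha$ for every Blaschke sequence; this recovers Protas and is the ``if'' direction. For the converse we need the reverse inequality, and here subadditivity points the wrong way, so the uniform separation must enter.

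For the lower bound I would argue locally and then sum. On the arc $I_n$ of length $\sim(1-|z_n|)$ centred at $\zeta_n=z_n/|z_n|$ one has $|e^{i\theta}-z_n|\lesssim(1-|z_n|)$, hence $|B'(e^{i\theta})|\ge P_{z_n}(\theta)\gtrsim(1-|z_n|)^{-1}$ and therefore $\int_{I_n}|B'|^p\gtrsim(1-|z_n|)\cdot(1-|z_n|)^{-p}=(1-|z_n|)^{\alpha}$. It remains to add these local contributions, and this is where the main obstacle lies: the arcs $I_n$ are \emph{not} disjoint, and for radial clusters (zeros of nearly equal argument) they are nested with unbounded multiplicity, so the overlap graph is not finitely colourable and one cannot simply sum. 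The interpolating hypothesis is exactly what controls this. Uniform separation bounds the multiplicity of overlaps among arcs of comparable length (the centres $\zeta_n$ at a fixed dyadic scale of $1-|z_n|$ are angularly separated by $\gtrsim\delta(1-|z_n|)$), while the cross-scale nesting inside a radial cluster is tamed by the geometric decay it forces, namely $1-|z_{m}|\le\frac{1-\delta}{1+\delta}\,(1-|z_{n}|)$ for two zeros of equal argument with pseudohyperbolic distance $\ge\delta$; consequently the $\alpha$-mass $\sum(1-|z_n|)^\alpha$ accumulated in any cluster is comparable to that of its outermost point. Organising the zeros by dyadic scale, extracting within each scale a bounded number of subfamilies with pairwise disjoint arcs, and summing across scales with the geometric factor then yields $\int_\T|B'|^p\gtrsim\sum(1-|z_n|)^\alpha$; the careful bookkeeping of this overlap-and-nesting argument is the crux of the proof.
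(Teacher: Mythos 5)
The paper does not actually prove Theorem~\ref{T:cohn}; it is quoted as background from Cohn \cite{Co83}, so your proposal has to stand on its own. The ``if'' half is fine (and, as you say, needs no separation hypothesis), and the boundary identity $|B'(e^{i\theta})|=\sum_n P_{z_n}(\theta)$ a.e.\ is a correct classical fact. The gap is in the lower bound, and it sits exactly where you yourself locate the crux. Your bookkeeping rests on the claim that within a ``cluster'' the quantities $(1-|z_m|)^{\alpha}$ sum to something comparable to the contribution of the outermost zero, because uniform separation forces geometric decay of $1-|z_m|$ along the cluster. That is true for zeros of \emph{equal} argument, but false for interpolating sequences in general: under an arc $I$ of length $2^{-k_0}$ place, for each $k_0\le k\le K$, about $2^{k-k_0}$ zeros at height $1-|z|\asymp 2^{-k}$ with angular spacing $\asymp 2^{-k}$ (a truncated dyadic lattice). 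This family is uniformly separated, all the associated arcs $I_m$ lie in $3I$, yet $\sum_m(1-|z_m|)^{1-p}\asymp 2^{-k_0}\sum_{k}2^{kp}\asymp 2^{-k_0}2^{Kp}$, which exceeds the outermost contribution $2^{-k_0(1-p)}$ by the unbounded factor $2^{(K-k_0)p}$. So the overlap multiplicity of your arcs is genuinely unbounded across scales, and the ``bounded number of disjoint subfamilies plus cluster-mass comparison'' cannot be carried out as described. (The two-sided comparison you aim for is nevertheless true --- in this example the pointwise sum $\sum_nP_{z_n}(\theta)$ is dominated by the finest scale and the integral does come out right --- but seeing that requires a pointwise analysis of $\sum_nP_{z_n}$, not a union of local estimates over the $I_n$.)

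The direction you are missing has a short standard proof, essentially Cohn's. Uniform separation gives $(1-|z_n|^2)|B'(z_n)|=\prod_{m\ne n}\left|\frac{z_n-z_m}{1-\overline{z}_m z_n}\right|\ge\delta>0$, and for an interpolating sequence $\mu=\sum_n(1-|z_n|)\delta_{z_n}$ is a Carleson measure, so the embedding $\sum_n(1-|z_n|)\,|g(z_n)|^p\le C\|g\|_{H^p}^p$ holds for every $0<p<\infty$. Applying this with $g=B'$ and $p=1-\alpha$ yields $\delta^{p}\sum_n(1-|z_n|)^{1-p}\le C\|B'\|_p^p<\infty$, i.e.\ $\sum_n(1-|z_n|)^{\alpha}<\infty$. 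This route avoids both the boundary identity and the covering problem entirely, and I would recommend replacing your lower-bound argument by it.
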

\noindent Recently, Kutbi \cite{kutbi02} showed that under the
hypothesis of Theorem \ref{T:prot},
\begin{equation} \label{E:kutbi}
\int_{0}^{2\pi} |B'(re^{i\theta})|^p \, d\theta =
\frac{o(1)}{(1-r)^{p+\alpha-1}}, \hspace{1cm} (r \to 1),
\end{equation}
for any $p>1-\alpha$. In particular, for $p=1$, we have
\[
\int_{0}^{2\pi} |B'(re^{i\theta})| \, d\theta =
\frac{o(1)}{(1-r)^{\alpha}}, \hspace{1cm} (r \to 1),
\]
which is a refinement of (\ref{E:norminfbp}).

Then, Protas \cite{Pr04} proved that the estimate (\ref{E:kutbi}) is still valid if $1/2<\alpha\leq 1$, $p\geq \alpha$ and the Blaschke sequence $(z_n)_{n\geq 1}$ satisfies (\ref{E:blasconal}). Finally, Gotoh \cite{Go07} got an extension of Protas's results for higher derivatives of $B$.

A Blaschke sequence which satisfies the Carleson condition is called
an interpolation, or Carleson, Blaschke sequence \cite[page
200]{pK98}. Let $I$ be an inner function for the unit disc. In
particular, $I$ could be any Blaschke product. Then,
\[
K_I :=H^2 \ominus I H^2
\]
is called the model subspace of $H^2$ generated by the inner
function $I$ \cite{effun, hmI-II}.  Cohn \cite{Co83} obtained the
following result about the derivative of functions in $K_B$.
\begin{Thm}[Cohn] \label{T:cohn2}
Let $(z_n)_{n \geq 1}$ be an interpolating Blaschke sequence, and
let $p \in (2/3,1)$. Then, $B' \in H^p$ if and only if $f' \in
H^{2p/(p+2)}$ for all $f \in K_B$.
\end{Thm}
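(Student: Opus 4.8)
The plan is to reduce both sides of the equivalence to the single arithmetic condition $\sum_n (1-|z_n|)^{1-p} < \infty$ on the zeros. For the left-hand side this is immediate: since $p \in (2/3,1)$ we have $\alpha := 1-p \in (0,1/3) \subset (0,\tfrac12)$, so Theorem \ref{T:cohn} (applicable because $B$ is interpolating) gives that $B' \in H^p$ if and only if $\sum_n (1-|z_n|)^{1-p} < \infty$. It then remains to show that the same condition characterizes the statement ``$f' \in H^q$ for every $f \in K_B$'', where I write $q := 2p/(p+2)$; the decisive feature is the identity $\frac1q = \frac1p + \frac12$, together with $q \in (\tfrac12,\tfrac23)$, so that $\tfrac12 < q < 1 < 2q$. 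Throughout I would use that, the sequence being interpolating, the normalized reproducing kernels $k_n(z) = (1-|z_n|^2)^{1/2}/(1-\bar z_n z)$ form a Riesz basis of $K_B$, so every $f \in K_B$ is $f = \sum_n a_n k_n$ with $\|(a_n)\|_{\ell^2} \asymp \|f\|_2$, and also the elementary estimate $\int_0^{2\pi} |1-\bar z_n e^{i\theta}|^{-2q}\,d\theta \asymp (1-|z_n|^2)^{1-2q}$, valid because $2q>1$.

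For the forward implication I would assume $\sum_n (1-|z_n|)^{1-p}<\infty$ and take $f = \sum_n a_n k_n \in K_B$. Differentiating the expansion and estimating on the circle gives $|f'(e^{i\theta})| \le \sum_n |a_n|(1-|z_n|^2)^{1/2}\,|1-\bar z_n e^{i\theta}|^{-2}$. Since $q<1$, subadditivity of $t \mapsto t^q$ yields $|f'(e^{i\theta})|^q \le \sum_n |a_n|^q (1-|z_n|^2)^{q/2}\,|1-\bar z_n e^{i\theta}|^{-2q}$; integrating and using the estimate above produces
\[
\int_0^{2\pi} |f'(e^{i\theta})|^q\,d\theta \ \lesssim\ \sum_n |a_n|^q\,(1-|z_n|^2)^{\,1-3q/2}.
\]
A direct computation gives $1-\tfrac{3q}{2} = \tfrac{2(1-p)}{p+2}$, and applying H\"older's inequality with the conjugate exponents $\tfrac2q$ and $\tfrac{p+2}{2}$ converts the right-hand side into $\|(a_n)\|_{\ell^2}^q \big(\sum_n (1-|z_n|^2)^{1-p}\big)^{2/(p+2)}$, since $\tfrac{2(1-p)}{p+2}\cdot\tfrac{p+2}{2} = 1-p$. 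Hence $\|f'\|_q \lesssim \|f\|_2$ and $f' \in H^q$ for every $f$.

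For the converse, suppose $f' \in H^q$ for all $f \in K_B$. The map $f \mapsto f'$ from $K_B$ (with the $H^2$ norm) into $H^q$ has a closed graph, hence is bounded: $\|f'\|_q \le C\|f\|_2$. The target is the reverse of the forward estimate, namely
\[
\|f'\|_q^q \ \gtrsim\ \sum_n |a_n|^q\,(1-|z_n|^2)^{\,1-3q/2} \qquad (f = \textstyle\sum_n a_n k_n),
\]
at least for nonnegative coefficients or after averaging. Granting it, the uniform bound gives $\sum_n |a_n|^q (1-|z_n|^2)^{1-3q/2} \lesssim \|(a_n)\|_{\ell^2}^q$ for every $(a_n) \in \ell^2$. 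Viewed through $\ell^{2/q}$--$\ell^{(p+2)/2}$ duality (the exponents are conjugate because $\tfrac q2 + \tfrac{2}{p+2}=1$), this is exactly the assertion that the weight $(1-|z_n|^2)^{1-3q/2}$ lies in $\ell^{(p+2)/2}$, i.e. $\sum_n (1-|z_n|^2)^{(1-3q/2)(p+2)/2} = \sum_n (1-|z_n|^2)^{1-p} < \infty$, which closes the loop.

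The crux, and the step I expect to be the main obstacle, is the reverse estimate for $\|f'\|_q$. Testing on a single kernel $k_n$ only bounds the individual terms and cannot recover the sum, and for $q<1$ there is no convexity to exploit; the summation can only survive if one controls the cancellation among the terms of $f'$. Here I would invoke the interpolating hypothesis in its geometric form: the $z_n$ are uniformly separated in the pseudohyperbolic metric and $\sum_n(1-|z_n|^2)\delta_{z_n}$ is a Carleson measure. Concretely, I would pass to random signs $f_\varepsilon = \sum_n \varepsilon_n a_n k_n$ and apply Khinchin's inequality to replace $\mathbb{E}_\varepsilon|f_\varepsilon'(e^{i\theta})|^q$ by the square function $\big(\sum_n |a_n|^2(1-|z_n|^2)|e^{i\theta}-z_n|^{-4}\big)^{q/2}$; the separation then ensures that near each $z_n$ the $n$-th term dominates on an arc of length $\asymp (1-|z_n|)$ with only bounded overlap, which restores the full sum after integration. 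Establishing this localization rigorously and uniformly over scales is the delicate part of the argument.
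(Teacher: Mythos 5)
First, a point of orientation: the paper does not prove this statement at all --- it is quoted from Cohn \cite{Co83} as background. The only part of it the paper touches is the implication ``$\sum_n(1-|z_n|)^{1-p}<\infty$ for an interpolating sequence $\Rightarrow f'\in H^{2p/(p+2)}$ for all $f\in K_B$'', which is the $h(t)=t^{1-p}$ case of Theorem~\ref{T:modhard}. Your forward direction is essentially identical to that proof: expand $f$ in the Riesz basis of normalized kernels, use $\sigma$-subadditivity ($\sigma=2p/(p+2)<1$), integrate the kernel estimate (using $2\sigma>1$), and apply H\"older with exponents $2/\sigma$ and $(p+2)/2$. Your exponent bookkeeping ($1-\tfrac{3q}{2}=\tfrac{2(1-p)}{p+2}$, etc.) checks out, and the reduction of the left-hand side to $\sum_n(1-|z_n|)^{1-p}<\infty$ via Theorem~\ref{T:cohn} is the standard architecture (though note you are then resting one quoted theorem of Cohn on another from the same paper, so the argument is not self-contained).

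The genuine gap is in the converse, exactly where you flag it, and the justification you offer for the reverse square-function estimate does not work as stated. You claim that, by separation, the arcs $I_n$ of length $\asymp(1-|z_n|)$ centered at $z_n/|z_n|$ have \emph{bounded overlap}. This is false for interpolating sequences in general: the radial sequence $z_n=1-2^{-n}$ is interpolating, yet the corresponding arcs are nested around $\theta=0$ and every neighborhood of that point meets all of them. Moreover, at a point $\theta$ lying in many arcs the pointwise inequality $\bigl(\sum_{n}t_n(\theta)\bigr)^{q/2}\gtrsim\sum_{n:\theta\in I_n}t_n(\theta)^{q/2}$ can fail badly for $q/2<1$ (take $|a_n|^2(1-|z_n|)^{-3}$ constant in $n$: the left side is $N^{q/2}$ against $N$ on the right). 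The desired integral inequality may well still be true --- in the radial test cases the two sides come out comparable because the terms $t_n(\theta)$ decay geometrically in $n$ away from the index best adapted to $|\theta|$ --- but that is a different mechanism from bounded overlap, and it is precisely the uniform, all-scales version of this decay that needs to be proved. Until that localization lemma is established (or replaced by Cohn's actual argument for the ``only if'' direction, which does not go through a Khinchin lower bound), the converse implication remains unproved. The closed-graph step and the $\ell^{2/q}$--$\ell^{(p+2)/2}$ duality step are fine once the reverse estimate is granted.
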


In this paper, we replace the condition (\ref{E:blasconal}) by a
more general assumption
\begin{equation} \label{E:blasconalgen}
\sum_{n=1}^{\infty} h(1-|z_n|) < \infty,
\end{equation}
where $h$ is a positive continuous function satisfying certain
smoothness conditions, and then we generalize all the preceding
results. Since our sequence already satisfies the Blaschke
condition, (\ref{E:blasconalgen}) will provide further information
about the rate of increase of the zeros only if $h(t) \geq t$ as $t
\to 0$.

In particular, we are interested in
\begin{equation} \label{E:defh}
h(t) = t^\alpha \,\, (\log 1/t)^{\alpha_1} \,\, (\log_2
1/t)^{\alpha_2} \,\, \cdots \,\, \,\, (\log_n 1/t)^{\alpha_n},
\end{equation}
where $\alpha \in (0,1)$, $\alpha_1,\alpha_2, \cdots, \alpha_n \in
\R$, and $\log_n = \log\log\cdots\log$ ($n$ times) \cite{jmcmft}.

In the following, we will use the estimates
\begin{eqnarray*}
\int_{0}^{2\pi} \frac{d\theta}{|1-re^{i\theta}|^\nu} &\asymp&
\frac{1}{(1-r)^{\nu-1}}, \hspace{1cm} (\nu>1),\\
\int_{0}^{1}\int_{0}^{2\pi} \frac{(1-\rho^2)^\gamma}{|1-r \rho
e^{i\theta}|^\nu} \,\,  \, \rho d\rho d\theta &\asymp&
\frac{1}{(1-r)^{\nu-\gamma-2}}, \hspace{1cm} (\nu-2>\gamma>-1),
\end{eqnarray*}
as $r \to 1^-$. See \cite[page 7]{hkz}. Both relations can be proved
using the fact that $|1-re^{i\theta}| \asymp (1-r) + |\theta|$ as $r
\to 1^-$.

\section{An estimation lemma}
In the following we assume that $h$ is a continuous positive
function defined on the interval $(0,1)$ with
\[
\lim_{t \to 0^+} h(t)=0.
\]
Our prototype is the one given in (\ref{E:defh}). The following
lemma has simple assumptions and also a very simple proof. However,
it has  many interesting applications in the rest of the paper.

\begin{Lem}  \label{L:estim}
Let $(r_n)_{n \geq 1}$ be a sequence in the interval $(0,1)$ such
that
\[
\sum_{n=1}^{\infty} h(1-r_n) < \infty.
\]
Let $p,q>0$ be such that $h(t)/t^p$ is decreasing and $h(t)/t^{p-q}$
is increasing on $(0,1)$. Then,
\[
\sum_{n=1}^{\infty} \frac{(1-r_n)^p}{(1-rr_n)^q} =
\frac{O(1)}{(1-r)^{q-p} \, h(1-r)}
\]
as $r \to 1^-$. Moreover, if
\[
\lim_{t \to 0^+} \frac{h(t)}{t^{p-q}} = 0,
\]
then
\[
\sum_{n=1}^{\infty} \frac{(1-r_n)^p}{(1-rr_n)^q} =
\frac{o(1)}{(1-r)^{q-p} \, h(1-r)}.
\]
\end{Lem}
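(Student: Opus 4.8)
The plan is to reduce the whole estimate to the elementary two-sided comparison of the denominator $1-rr_n$ with $(1-r)+(1-r_n)$, and then to use the two monotonicity hypotheses on $h$ on two complementary ranges of indices. Write $\epsilon=1-r$ and $s_n=1-r_n$. Since $r,r_n\in(0,1)$, a direct computation gives $1-rr_n\ge 1-r_n=s_n$, $1-rr_n\ge 1-r=\epsilon$, and $1-rr_n\le(1-r)+(1-r_n)=\epsilon+s_n$, so that
\[
\max(\epsilon,s_n)\le 1-rr_n\le \epsilon+s_n\le 2\max(\epsilon,s_n),
\]
i.e. $1-rr_n\asymp\max(\epsilon,s_n)$ uniformly in $n$. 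This lets me replace $(1-rr_n)^q$ by either $\epsilon^q$ or $s_n^q$, up to absolute constants, according to which of $\epsilon,s_n$ is larger.

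Next I would split the sum according to whether $s_n\le\epsilon$ or $s_n>\epsilon$. On the range $s_n\le\epsilon$ the denominator is comparable to $\epsilon^q$, so a typical term is $\asymp s_n^p/\epsilon^q$; here I invoke that $h(t)/t^p$ is decreasing, which for $s_n\le\epsilon$ gives $h(s_n)/s_n^p\ge h(\epsilon)/\epsilon^p$, hence $s_n^p\le(\epsilon^p/h(\epsilon))\,h(s_n)$ and therefore $s_n^p/\epsilon^q\le(\epsilon^{p-q}/h(\epsilon))\,h(s_n)$. On the range $s_n>\epsilon$ the denominator is comparable to $s_n^q$, so a typical term is $\asymp s_n^{p-q}$; here I invoke that $h(t)/t^{p-q}$ is increasing, which for $s_n>\epsilon$ gives $h(s_n)/s_n^{p-q}\ge h(\epsilon)/\epsilon^{p-q}$, hence $s_n^{p-q}\le(\epsilon^{p-q}/h(\epsilon))\,h(s_n)$. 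The point is that both ranges produce the \emph{same} factor $\epsilon^{p-q}/h(\epsilon)$. Summing both pieces and using $\sum_n h(s_n)<\infty$ yields a bound of the form $C\,(\epsilon^{p-q}/h(\epsilon))\sum_n h(s_n)$, which is exactly the asserted $O(1)/((1-r)^{q-p}h(1-r))$.

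For the $o(1)$ refinement I would run a tail argument, and this is the only genuinely delicate point. The extra hypothesis $\lim_{t\to0^+}h(t)/t^{p-q}=0$ is precisely the statement that the normalizing quantity $1/(\epsilon^{q-p}h(\epsilon))=\epsilon^{p-q}/h(\epsilon)$ tends to $+\infty$ as $\epsilon\to0^+$. Fix $\delta>0$ and choose $N$ with $\sum_{n>N}h(s_n)<\delta$. Applying the estimate above only to the tail gives $\sum_{n>N}(\cdots)\le C\delta\,\epsilon^{p-q}/h(\epsilon)$. The finitely many head terms $n\le N$ cannot be controlled by the monotonicity estimate, but each of them converges to the finite constant $s_n^{p-q}$ as $\epsilon\to0^+$ (since $r_n$ is fixed and $1-rr_n\to s_n$), so their sum stays bounded; dividing by the factor $\epsilon^{p-q}/h(\epsilon)\to\infty$ therefore sends the head's contribution to $0$. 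Taking $\limsup_{r\to1^-}$ of the normalized sum leaves at most $C\delta$, and letting $\delta\to0$ completes the proof. The main obstacle is exactly this separation: the convergence of the series handles the tail, while the divergence of the normalizing factor is what kills the finite head.
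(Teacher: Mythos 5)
Your proof is correct and follows essentially the same route as the paper's: both arguments use the two monotonicity hypotheses to dominate each term by $h(1-r_n)\,(1-r)^{p-q}/h(1-r)$ (you via the comparison $1-rr_n\asymp\max(1-r,1-r_n)$ and an index split, the paper via a term-by-term factorization using $1-rr_n\ge 1-r_n$ and $1-rr_n\ge 1-r$), and both obtain the $o(1)$ refinement by the identical head/tail argument, with the finite head killed by the divergence of $(1-r)^{p-q}/h(1-r)$.
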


\begin{proof}
We have
\[
\frac{(1-r_n)^p}{(1-rr_n)^q} =
\bigg(\, \frac{(1-r_n)^p}{h(1-r_n)} \, \frac{h(1-rr_n)}{(1-rr_n)^p}
\,\bigg) \,\,
\bigg(\,  \frac{h(1-r_n)}{(1-rr_n)^{q-p} \, h(1-rr_n)} \,\bigg).
\]
By assumption
\[
\frac{h(1-rr_n)}{(1-rr_n)^p} \leq \frac{h(1-r_n)}{(1-r_n)^p},
\]
and
\[
(1-rr_n)^{q-p} \, h(1-rr_n) \geq (1-r)^{q-p} \, h(1-r).
\]
Thus, for any $n \geq 1$,
\begin{equation} \label{E:estim1}
\frac{(1-r_n)^p}{(1-rr_n)^q} \leq
\frac{h(1-r_n)}{(1-r)^{q-p} \, h(1-r)}.
\end{equation}
Given $\varepsilon>0$, fix $N$ such that
\[
\sum_{n=N+1}^{\infty} h(1-r_n) < \varepsilon.
\]
Hence, by (\ref{E:estim1}),
\begin{eqnarray*}
\sum_{n=1}^{\infty} \frac{(1-r_n)^p}{(1-rr_n)^q} &=& \sum_{n=1}^{N}
\frac{(1-r_n)^p}{(1-rr_n)^q} +
\sum_{n=N+1}^{\infty} \frac{(1-r_n)^p}{(1-rr_n)^q}\\
&\leq& \sum_{n=1}^{N} (1-r_n)^{p-q} + \frac{\sum_{n=N+1}^{\infty}
h(1-r_n)}{(1-r)^{q-p} \, h(1-r)}\\
&\leq& C_N + \frac{\varepsilon}{(1-r)^{q-p} \, h(1-r)},
\end{eqnarray*}
where $C_N$ is independent of $r$. This inequality implies both assertions of the Lemma.
\end{proof}

\noindent The Lemma is still valid if instead of ``decreasing" and
``increasing", we assume that our functions are respectively
``boundedly decreasing" and ``boundedly increasing". We say that
$\varphi$ is boundedly increasing if there is a constant $C>0$ such
that $\varphi(x) \leq C \varphi(y)$ whenever $x \leq y$. Similarly,
$\varphi$ is boundedly decreasing if there is a constant $C>0$ such
that $\varphi(x) \geq C \varphi(y)$ whenever $x \leq y$.

\section{$H^p$ means of the first derivative}
In this section we apply Lemma \ref{L:estim} to obtain a general
estimate for the integral means of the first derivative of a
Blaschke product. Special cases of the following theorem generalize
Protas and Kutbi's results.

\begin{Thm} \label{T:caseles12}
Let $B$ be the Blaschke product formed with zeros $z_n=r_n
e^{i\theta_n}$, $n \geq 1$, satisfying
\[
\sum_{n=1}^{\infty} h(1-r_n) < \infty
\]
for a positive continuous function $h$. Suppose that there is $q \in
(1/2,1]$ such that $h(t)/t^q$ is decreasing and $h(t)/t^{1-q}$ is
increasing on $(0,1)$. Then, for any $p \geq q$,
\[
\int_{0}^{2\pi} |B'(re^{i\theta})|^p \, d\theta =
\frac{O(1)}{(1-r)^{p-1} \, h(1-r)}, \hspace{1cm} (r \to 1).
\]
Moreover, if $\lim_{t \to 0} h(t)/t^{1-q} = 0$, then $O(1)$ can be
replaced by $o(1)$.
\end{Thm}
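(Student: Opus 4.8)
The plan is to start from the pointwise majorant (\ref{E:estb'1}) and write $S(r,\theta)=\sum_{n\ge1}\frac{1-|z_n|^2}{|1-\bar z_n re^{i\theta}|^2}\ge|B'(re^{i\theta})|$, using throughout that $1-|z_n|^2\asymp 1-r_n$. I would treat the two ranges $q\le p\le 1$ and $p\ge 1$ separately, the overlap at $p=1$ causing no trouble. For $q\le p\le 1$ the argument is direct: since $x\mapsto x^p$ is subadditive for $0<p\le 1$, one has $S(r,\theta)^p\le\sum_n\frac{(1-|z_n|^2)^p}{|1-\bar z_n re^{i\theta}|^{2p}}$, and integrating in $\theta$ with the first displayed estimate of the introduction (with $\nu=2p>1$, legitimate because $p\ge q>\tfrac12$) gives
\[
\int_{0}^{2\pi}|B'(re^{i\theta})|^p\,d\theta\le\int_{0}^{2\pi}S(r,\theta)^p\,d\theta\lesssim\sum_{n\ge1}\frac{(1-r_n)^p}{(1-rr_n)^{2p-1}}.
\]
I would then apply Lemma \ref{L:estim} with parameters $(p,2p-1)$ in place of $(p,q)$; its hypotheses follow from those on $q$, since $h(t)/t^{p}=(h(t)/t^{q})\,t^{q-p}$ is decreasing and $h(t)/t^{1-p}=(h(t)/t^{1-q})\,t^{p-q}$ is increasing once $p\ge q$. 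The Lemma returns $O(1)\,(1-r)^{-(p-1)}h(1-r)^{-1}$, as claimed.

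The case $p\ge1$ is the crux, and here estimating $\int S^p$ is the wrong move: when the zeros cluster, $S$ badly overestimates $|B'|$ (it discards the factors $|b_k|\le1$ coming from the product), so that $\int S^p$ can \emph{exceed} the asserted bound; moreover any splitting of $S^p$ by H\"older or Minkowski reduces to a \emph{product} of Lemma-type sums and thereby manufactures a spurious factor $h(1-r)^{-(p-1)}$. The device I would use to avoid both defects is the Schwarz--Pick inequality: since $B$ maps $\D$ into $\overline{\D}$, one has the $h$-free pointwise bound $|B'(z)|\le\frac{1-|B(z)|^2}{1-|z|^2}\le\frac{1}{1-|z|^2}$. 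Writing $|B'|^p=|B'|^{p-1}|B'|$ and peeling off the first factor,
\[
\int_{0}^{2\pi}|B'(re^{i\theta})|^p\,d\theta\le\frac{1}{(1-r^2)^{p-1}}\int_{0}^{2\pi}|B'(re^{i\theta})|\,d\theta,
\]
and the remaining $L^1$ integral is exactly the case $p=1$ already treated, which (via $|B'|\le S$, termwise integration, and Lemma \ref{L:estim} with parameters $(1,1)$, whose hypotheses are that $h(t)/t$ is decreasing and $h$ is increasing) gives $\int_{0}^{2\pi}|B'|\,d\theta\lesssim h(1-r)^{-1}$. Multiplying by $(1-r^2)^{-(p-1)}$ yields $O(1)\,(1-r)^{-(p-1)}h(1-r)^{-1}$, with the power of $h$ kept at one precisely because the peeled-off factor carried no $h$.

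For the refinement, under $\lim_{t\to0}h(t)/t^{1-q}=0$ the second part of Lemma \ref{L:estim} applies in the range $q\le p\le 1$, since $h(t)/t^{1-p}=(h(t)/t^{1-q})\,t^{p-q}\to0$; this upgrades $O(1)$ to $o(1)$. In the range $p\ge1$ the $o(1)$ is in fact automatic: the standing assumption $\lim_{t\to0}h(t)=0$ already makes the $(1,1)$-sum $o(1)\,h(1-r)^{-1}$ through the second part of the Lemma. The single genuinely delicate point is the passage to $p>1$: the insight that one must replace the direct estimate of $\int S^p$ by the Schwarz--Pick pointwise bound is what prevents the loss of a power of $h$, and it is the step I expect to be the main obstacle to get right.
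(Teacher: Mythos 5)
Your proof is correct and follows essentially the same route as the paper's: the termwise bound on $|B'|$ raised to a power $\le 1$ by subadditivity, integration against the standard kernel estimate, Lemma \ref{L:estim}, and the pointwise bound $|B'(z)|\lesssim (1-|z|)^{-1}$ (which the paper cites as Bloch-space membership of $H^\infty$ rather than Schwarz--Pick) to lift to larger exponents. The only organizational difference is that the paper anchors at the exponent $q$ and peels off $|B'|^{p-q}$ for every $p\ge q$, whereas you anchor at $\min(p,1)$; your side remark that the $o(1)$ is automatic for $p\ge 1$ is a correct small bonus consistent with the statement.
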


\begin{proof}
Since $q \leq 1$, (\ref{E:estb'1}) implies
\[
|B'(re^{i\theta})|^q \leq \sum_{n=1}^{\infty} \frac{(1-r_n^2)^q}{|\,
1-rr_ne^{i(\theta-\theta_n)} \,|^{2q}}.
\]
Hence
\begin{equation} \label{E:firstder}
\int_{0}^{2\pi} |B'(re^{i\theta})|^q \, d\theta  \leq C \,
\sum_{n=1}^{\infty} \frac{(1-r_n)^q}{(1-rr_n)^{2q-1}}.
\end{equation}
(Here we used $2q>1$.) Therefore, by Lemma \ref{L:estim},
\[
\int_{0}^{2\pi} |B'(re^{i\theta})|^q \, d\theta  \leq
 \frac{C}{(1-r)^{q-1} \, h(1-r)}.
\]
Now recall that any function $f$ in $H^\infty$  is in the Bloch space $\mathcal B$ \cite[page 44]{pd04}, that is 
\[
\sup_{z\in\mathbb D}(1-|z|^2)|f'(z)|<+\infty\,.
\]
Hence, for any $p \geq q$,
\[
\int_{0}^{2\pi} |B'(re^{i\theta})|^p \, d\theta  \leq
 \frac{1}{(1-r)^{p-q}} \, \int_{0}^{2\pi} |B'(re^{i\theta})|^q \,\, d\theta
 \leq \frac{C}{(1-r)^{p-1} \, h(1-r)}.
\]
Finally, as $r \to 1$, Lemma \ref{L:estim} also assures that $C$ can
be replaced by any small positive constant if $\lim_{t \to 0}
h(t)/t^{1-q} = 0$.
\end{proof}

\noindent Now, we can apply Theorem \ref{T:caseles12} for the
special function defined in (\ref{E:defh}).

{\bf Case I:}  If
\[
\sum_{n=1}^\infty (1-r_n)^{\alpha} (\log \frac{1}{1-r_n}
)^{\alpha_1}\cdots (\log_m \frac{1}{1-r_n} )^{\alpha_m} <\infty,
\]
then, for any
\[
p > \max\{ \alpha,1-\alpha \}
\]
we have
\[
\int_{0}^{2\pi} |B'(re^{i\theta})|^p \, d\theta = \frac{o(1)}{
(1-r)^{\alpha+p-1} (\log
 \frac{1}{1-r} )^{\alpha_1}\cdots (\log_m \frac{1}{1-r} )^{\alpha_m} },
\hspace{1cm} (r \to 1).
\]

\noindent In particular, if
\[
\sum_{n=1}^\infty (1-r_n)^{\alpha}  <\infty,
\]
 with $\alpha \in (0,1/2)$, then, for
any $p > 1-\alpha$,
\[
\int_{0}^{2\pi} |B'(re^{i\theta})|^p \, d\theta =
\frac{o(1)}{(1-r)^{p+\alpha-1}}, \hspace{1cm} (r \to 1),
\]
which is Kutbi's result. Moreover, if $\alpha \in [1/2,1)$, the last
estimate still holds for any $p > \alpha$, which is Protas's result \cite{Pr04}.

{\bf Case II:} If
\[
\sum_{n=1}^\infty (1-r_n)^{\alpha} (\log_k \frac{1}{1-r_n}
)^{\alpha_k}\cdots (\log_m \frac{1}{1-r_n} )^{\alpha_m} <\infty,
\]
with $\alpha \in (0,1/2)$, $\alpha_k<0$ and $\alpha_{k+1}, \cdots,
\alpha_m \in \R$, then,
\[
\int_{0}^{2\pi} |B'(re^{i\theta})|^{1-\alpha} \, d\theta =
\frac{o(1)}{ (\log_k
 \frac{1}{1-r} )^{\alpha_k}\cdots (\log_m \frac{1}{1-r} )^{\alpha_m} }, \hspace{1cm} (r \to 1).
\]

\noindent But, if
\[
\sum_{n=1}^\infty (1-r_n)^{\alpha}  <\infty,
\]
with $\alpha \in (0,1/2)$, then
\[
\int_{0}^{2\pi} |B'(re^{i\theta})|^{1-\alpha} \, d\theta = O(1),
\hspace{1cm} (r \to 1),
\]
i.e. $B' \in H^{1-\alpha}$, which is Protas' result \cite{Pr73}.

{\bf Case III:} If
\[
\sum_{n=1}^\infty (1-r_n)^{\alpha} (\log_k \frac{1}{1-r_n}
)^{\alpha_k}\cdots (\log_m \frac{1}{1-r_n} )^{\alpha_m} <\infty,
\]
with $\alpha \in (1/2,1)$, $\alpha_k>0$ and $\alpha_{k+1}, \cdots,
\alpha_n \in \R$, then,
\[
\int_{0}^{2\pi} |B'(re^{i\theta})|^{\alpha} \, d\theta =
\frac{o(1)}{ (1-r)^{2\alpha-1} (\log
 \frac{1}{1-r} )^{\alpha_1}\cdots (\log_m \frac{1}{1-r} )^{\alpha_m} }, \hspace{1cm} (r \to 1).
\]

\noindent However, if
\[
\sum_{n=1}^\infty (1-r_n)^{\alpha}  <\infty,
\]
with $\alpha \in (1/2,1)$, then we still have
\[
\int_{0}^{2\pi} |B'(re^{i\theta})|^{\alpha} \, d\theta =
\frac{o(1)}{(1-r)^{2\alpha-1}}, \hspace{1cm} (r \to 1).
\]

\section{$H^p$  means of higher derivatives}
Straightforward calculation leads to
\begin{equation} \label{E:higherder}
\int_{0}^{2\pi} |B^{(\ell)} (re^{i\theta})|^p \, d\theta  \leq
C(p,\ell) \, \, \sum_{n=1}^{\infty}
\frac{(1-r_n)^p}{(1-rr_n)^{(\ell+1)p-1}}, \hspace{1cm}
(\frac{1}{\ell+1} < p \leq \frac{1}{\ell}),
\end{equation}
which is a generalization of (\ref{E:firstder}). This observation
along with Lemma \ref{L:estim} enable us to generalize the results
of the preceding section for higher derivatives of a Blaschke
product. The proof is similar to that of Theorem \ref{T:caseles12}.

\begin{Thm} \label{T:caseles12gen}
Let $B$ be the Blaschke product formed with zeros $z_n=r_n
e^{i\theta_n}$, $n \geq 1$, satisfying
\[
\sum_{n=1}^{\infty} h(1-r_n) < \infty
\]
for a positive continuous function $h$. Suppose that there is $q \in
(1/(\ell+1),1/\ell]$ such that $h(t)/t^q$ is decreasing and
$h(t)/t^{1-\ell q}$ is increasing on $(0,1)$. Then, for any $p \geq
q$,
\[
\int_{0}^{2\pi} |B^{(\ell)} (re^{i\theta})|^p \, d\theta =
\frac{O(1)}{(1-r)^{\ell p-1} \, h(1-r)}, \hspace{1cm} (r \to 1).
\]
Moreover, if $\lim_{t \to 0} h(t)/t^{1-\ell q} = 0$, then $O(1)$ can
be replaced by $o(1)$.
\end{Thm}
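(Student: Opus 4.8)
The plan is to run the proof of Theorem \ref{T:caseles12} with two substitutions: replace the first-derivative input (\ref{E:firstder}) by its higher-order analogue (\ref{E:higherder}), and replace the Bloch estimate by the corresponding $\ell$-th order growth bound for $B'$. First I would invoke (\ref{E:higherder}) at the exponent $p=q$, which is legitimate precisely because the hypothesis $q\in(1/(\ell+1),1/\ell]$ places $q$ in the admissible range $\frac{1}{\ell+1}<q\le\frac1\ell$ demanded there. This gives
\[
\int_0^{2\pi}|B^{(\ell)}(re^{i\theta})|^q\,d\theta \le C(q,\ell)\sum_{n=1}^\infty \frac{(1-r_n)^q}{(1-rr_n)^{(\ell+1)q-1}}.
\]

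Next I would feed the right-hand sum into Lemma \ref{L:estim}, matching it to the generic sum $\sum_n (1-r_n)^{P}/(1-rr_n)^{Q}$ with $P=q$ and $Q=(\ell+1)q-1$. The positivity $P=q>0$ is immediate, while $Q=(\ell+1)q-1>0$ is exactly the standing assumption $q>1/(\ell+1)$. The monotonicity hypotheses of the lemma, that $h(t)/t^{P}=h(t)/t^{q}$ be decreasing and that $h(t)/t^{\,P-Q}=h(t)/t^{\,1-\ell q}$ be increasing, coincide word for word with the two assumptions in the statement. The lemma therefore yields
\[
\sum_{n=1}^\infty \frac{(1-r_n)^q}{(1-rr_n)^{(\ell+1)q-1}}=\frac{O(1)}{(1-r)^{Q-P}\,h(1-r)}=\frac{O(1)}{(1-r)^{\ell q-1}\,h(1-r)},
\]
since $Q-P=\big((\ell+1)q-1\big)-q=\ell q-1$. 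Combining the two displays settles the case $p=q$.

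To pass from $q$ to a general $p\ge q$ I would use that $B\in H^\infty$ with $\|B\|_\infty\le1$, so Cauchy's estimate on the disc of radius $(1-|z|)/2$ about $z$ gives the growth bound $|B^{(\ell)}(z)|\le C_\ell(1-|z|)^{-\ell}$, the exact $\ell$-th order replacement for the Bloch bound used when $\ell=1$. Writing $|B^{(\ell)}|^p=|B^{(\ell)}|^{p-q}\,|B^{(\ell)}|^q$ and bounding the first factor uniformly on the circle $|z|=r$ by $C_\ell^{\,p-q}(1-r)^{-\ell(p-q)}$ converts the case $p=q$ into
\[
\int_0^{2\pi}|B^{(\ell)}(re^{i\theta})|^p\,d\theta \le \frac{C}{(1-r)^{\ell(p-q)}}\cdot\frac{O(1)}{(1-r)^{\ell q-1}\,h(1-r)}=\frac{O(1)}{(1-r)^{\ell p-1}\,h(1-r)},
\]
which is the asserted bound. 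Finally, if $\lim_{t\to0}h(t)/t^{1-\ell q}=0$, the second clause of Lemma \ref{L:estim} upgrades the estimate for the sum to $o(1)$; since the growth-factor step is uniform in $r$, this $o(1)$ is preserved, giving the refined conclusion.

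I would expect the only genuinely new ingredient over the $\ell=1$ argument to be the $\ell$-th order growth estimate $|B^{(\ell)}(z)|=O\big((1-|z|)^{-\ell}\big)$; everything else is bookkeeping. The one point I would take care to verify is that the exponent arithmetic in the Lemma application, namely $P-Q=1-\ell q$ and $Q-P=\ell q-1$, lines up exactly with the two monotonicity hypotheses and with the target exponent $\ell p-1$, which it does.
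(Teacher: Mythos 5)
Your proposal is correct and follows exactly the route the paper intends: the paper gives no separate proof of Theorem \ref{T:caseles12gen}, stating only that it follows from (\ref{E:higherder}) together with Lemma \ref{L:estim} by the same argument as Theorem \ref{T:caseles12}, which is precisely what you carry out (with the correct exponent bookkeeping $Q-P=\ell q-1$ and the Cauchy-estimate bound $|B^{(\ell)}(z)|=O((1-|z|)^{-\ell})$ replacing the Bloch bound). No gaps.
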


\noindent Now, we can apply Theorem \ref{T:caseles12gen} for the
special function defined in (\ref{E:defh}).

{\bf Case I:}  If
\[
\sum_{n=1}^\infty (1-r_n)^{\alpha} (\log \frac{1}{1-r_n}
)^{\alpha_1}\cdots (\log_m \frac{1}{1-r_n} )^{\alpha_m} <\infty,
\]
then, for any
\[
p > \max\{ \alpha,(1-\alpha)/\ell \}
\]
we have
\[
\int_{0}^{2\pi} |B^{(\ell)} (re^{i\theta})|^p \, d\theta =
\frac{o(1)}{ (1-r)^{\alpha+\ell p-1} (\log
 \frac{1}{1-r} )^{\alpha_1}\cdots (\log_m \frac{1}{1-r} )^{\alpha_m} }, \hspace{1cm} (r \to 1).
\]

\noindent In particular, if
\[
\sum_{n=1}^\infty (1-r_n)^{\alpha}  <\infty,
\]
with $\alpha \in (0,1/(\ell+1))$, then, for any $p >
(1-\alpha)/\ell$,
\[
\int_{0}^{2\pi} |B^{(\ell)}(re^{i\theta})|^p \, d\theta =
\frac{o(1)}{(1-r)^{\ell p+\alpha-1}}, \hspace{1cm} (r \to 1),
\]
which is Kutbi's result. Moreover, if $\alpha \in [1/(\ell+1),1)$,
the last estimate still holds for any $p > \alpha$, which is Gotoh's result \cite{Go07} .

{\bf Case II:} If
\[
\sum_{n=1}^\infty (1-r_n)^{\alpha} (\log_k \frac{1}{1-r_n}
)^{\alpha_k}\cdots (\log_m \frac{1}{1-r_n} )^{\alpha_m} <\infty,
\]
with $\alpha \in (0,1/(1+\ell))$, $\alpha_k<0$ and $\alpha_{k+1},
\cdots, \alpha_n \in \R$, then,
\[
\int_{0}^{2\pi} |B^{(\ell)} (re^{i\theta})|^{(1-\alpha)/\ell} \,\,
d\theta = \frac{o(1)}{ (\log_k
 \frac{1}{1-r} )^{\alpha_k}\cdots (\log_m \frac{1}{1-r} )^{\alpha_m} }, \hspace{1cm} (r \to 1).
\]

\noindent But, if
\[
\sum_{n=1}^\infty (1-r_n)^{\alpha}<+\infty,
\]
with $\alpha \in (0,1/(1+\ell))$, then
\[
\int_{0}^{2\pi} |B^{(\ell)}(re^{i\theta})|^{(1-\alpha)/\ell} \,
d\theta = O(1), \hspace{1cm} (r \to 1),
\]
i.e. $B^{(\ell)} \in H^{(1-\alpha)/\ell}$ which is Linden's result
\cite{Li76}.

{\bf Case III:}  If
\[
\sum_{n=1}^\infty (1-r_n)^{\alpha} (\log_k \frac{1}{1-r_n}
)^{\alpha_k}\cdots (\log_m \frac{1}{1-r_n} )^{\alpha_m} <\infty,
\]
with $\alpha \in (1/(1+\ell),1)$, $\alpha_k>0$ and $\alpha_{k+1},
\cdots, \alpha_n \in \R$, then,
\[
\int_{0}^{2\pi} |B^{(\ell)} (re^{i\theta})|^{\alpha} \,\, d\theta =
\frac{o(1)}{ (1-r)^{(\ell+1) \alpha-1}  (\log_k
 \frac{1}{1-r} )^{\alpha_k}\cdots (\log_m \frac{1}{1-r} )^{\alpha_m} }, \hspace{1cm} (r \to 1).
\]

\noindent However, if
\[
\sum_{n=1}^\infty (1-r_n)^{\alpha}  <\infty,
\]
with $\alpha \in (1/(\ell+1),1)$, then we still have
\[
\int_{0}^{2\pi} |B^{(\ell)} (re^{i\theta})|^\alpha \, d\theta =
\frac{o(1)}{(1-r)^{(\ell+1)\alpha-1}}, \hspace{1cm} (r \to 1).
\]

\section{$A^p_\gamma$  means of the first derivative}
In this section we apply Lemma \ref{L:estim} to obtain a general
estimate for the integral means of the first derivative of a
Blaschke product. Special cases of the following theorem generalize
Protas's results \cite{Pr73}.

\begin{Thm} \label{T:caseles12-berg}
Let $B$ be the Blaschke product formed with zeros $z_n=r_n
e^{i\theta_n}$ satisfying
\[
\sum_{n=1}^{\infty} h(1-r_n) < \infty
\]
for a positive continuous function $h$. Let $\gamma \in (-1,0)$.
Suppose that there is $q \in (1+\gamma/2,1]$ such that $h(t)/t^q$ is
decreasing and $h(t)/t^{2+\gamma-q}$ is increasing on $(0,1)$. Then,
for any $p \geq q$,
\[
\int_{0}^{1}\int_{0}^{2\pi} |B'(r\rho e^{i\theta})|^p \,\,\,
\rho(1-\rho^2)^\gamma d\rho \,\, d\theta =
\frac{O(1)}{(1-r)^{p-\gamma-2} \, h(1-r)}, \hspace{1cm} (r \to 1).
\]
Moreover, if $\lim_{t \to 0} h(t)/t^{2+\gamma-q} = 0$, then $O(1)$
can be replaced by $o(1)$.
\end{Thm}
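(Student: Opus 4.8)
The plan is to follow the same three-stage strategy used in the proof of Theorem \ref{T:caseles12}, replacing the one-dimensional angular estimate by its Bergman counterpart. Since $q \le 1$, the inequality $|\sum a_n|^q \le \sum |a_n|^q$ applied to (\ref{E:estb'1}) gives
\[
|B'(r\rho e^{i\theta})|^q \le \sum_{n=1}^{\infty} \frac{(1-r_n^2)^q}{|\,1-rr_n\rho\,e^{i(\theta-\theta_n)}\,|^{2q}}.
\]
Integrating against the weight $\rho(1-\rho^2)^\gamma\,d\rho\,d\theta$ and interchanging sum and integral by Tonelli, I would treat $rr_n$ as the radial parameter in the second of the two integral asymptotics recorded at the end of the introduction (the shift $\theta_n$ being irrelevant by periodicity). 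That estimate is available because $\nu=2q$ satisfies $\nu-2=2q-2>\gamma$ precisely when $q>1+\gamma/2$; after absorbing $(1+r_n)^q\le 2^q$ into the constant it yields
\[
\int_0^1\!\!\int_0^{2\pi} |B'(r\rho e^{i\theta})|^q\,\rho(1-\rho^2)^\gamma\,d\rho\,d\theta \le C\sum_{n=1}^{\infty} \frac{(1-r_n)^q}{(1-rr_n)^{2q-\gamma-2}}.
\]

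Next I would invoke Lemma \ref{L:estim} with its parameters taken to be $p=q$ and $q=2q-\gamma-2$. The hypotheses of the present theorem are tailored exactly to this: ``$h(t)/t^q$ decreasing'' is the requirement that $h(t)/t^{p}$ be decreasing, while ``$h(t)/t^{2+\gamma-q}$ increasing'' is the requirement that $h(t)/t^{p-q}$ be increasing, since $q-(2q-\gamma-2)=2+\gamma-q$. The lemma then produces
\[
\sum_{n=1}^{\infty} \frac{(1-r_n)^q}{(1-rr_n)^{2q-\gamma-2}} = \frac{O(1)}{(1-r)^{q-\gamma-2}\,h(1-r)},
\]
which settles the case $p=q$. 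To pass to an arbitrary $p\ge q$, I would again use that $B\in H^\infty$ belongs to the Bloch space, so $|B'(r\rho e^{i\theta})|\le C/(1-r\rho)\le C/(1-r)$; raising this to the power $p-q$ and factoring it out of the integral multiplies the previous bound by $(1-r)^{-(p-q)}$, turning the exponent $q-\gamma-2$ into $p-\gamma-2$, which is the claimed estimate. The $o(1)$ refinement follows verbatim from the second assertion of Lemma \ref{L:estim}, whose extra hypothesis $\lim_{t\to0}h(t)/t^{p-q}=0$ is here exactly $\lim_{t\to0}h(t)/t^{2+\gamma-q}=0$.

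Since every analytic ingredient is supplied by the lemma and the two integral asymptotics, I do not expect a serious obstacle; the only points needing care are bookkeeping ones. I would verify that the exponent $2q-\gamma-2$ is strictly positive (guaranteed by $q>1+\gamma/2$), so that Lemma \ref{L:estim} is applied with a positive second parameter, and I would confirm that the Bergman estimate may be used with a single constant $C$ for every $n$: the $\asymp$ relation holds as $rr_n\to1^-$, while on any region where $rr_n$ stays bounded away from $1$ the left-hand integral is uniformly bounded (the weight $(1-\rho^2)^\gamma$ being integrable since $\gamma>-1$), so one constant dominates $1/(1-rr_n)^{2q-\gamma-2}$ throughout, exactly as in the $H^p$ argument.
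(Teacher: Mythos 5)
Your proposal is correct and follows essentially the same route as the paper's own proof: the subadditivity bound for $|B'|^q$, the Bergman integral asymptotic to reduce to the sum $\sum_n (1-r_n)^q/(1-rr_n)^{2q-\gamma-2}$, Lemma \ref{L:estim} with parameters $(q,\,2q-\gamma-2)$, and the Bloch-space bound $|B'|\le C/(1-r)$ to pass from the exponent $q$ to a general $p\ge q$. The extra bookkeeping you flag (positivity of $2q-\gamma-2$ and uniformity of the constant in the Bergman estimate) is sound and only makes explicit what the paper leaves implicit.
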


\begin{proof}We saw that
\[
|B'(r \rho e^{i\theta})|^q \leq \sum_{n=1}^{\infty}
\frac{(1-r_n^2)^q}{|\, 1-r  r_n \rho e^{i(\theta-\theta_n)}
\,|^{2q}}.
\]
Hence
\begin{equation} \label{E:firstderberg}
\int_{0}^{1}\int_{0}^{2\pi} |B'(r \rho e^{i\theta})|^q \,\,\,
\rho(1-\rho^2)^\gamma d\rho \,\, d\theta  \leq C \,
\sum_{n=1}^{\infty} \frac{(1-r_n)^q}{(1-rr_n)^{2q-\gamma-2}}.
\end{equation}
(Here we used $2q-\gamma-2>0$.) Therefore, by Lemma \ref{L:estim},
\[
\int_{0}^{1}\int_{0}^{2\pi} |B'(r \rho e^{i\theta})|^q \,\,\,
\rho(1-\rho^2)^\gamma d\rho \,\, d\theta  \leq
 \frac{C}{(1-r)^{q-\gamma-2} \, h(1-r)}.
\]
Hence, for any $p \geq q$,
\begin{eqnarray*}
\int_{0}^{1}\int_{0}^{2\pi} |B'(r \rho e^{i\theta})|^p \,\,\,
\rho(1-\rho^2)^\gamma d\rho \,\, d\theta
&\leq&
 \frac{1}{(1-r)^{p-q}} \, \int_{0}^{1}\int_{0}^{2\pi} |B'(r \rho e^{i\theta})|^q \,\,\,
\rho(1-\rho^2)^\gamma d\rho \,\, d\theta\\
&\leq& \frac{C}{(1-r)^{p-\gamma-2} \, h(1-r)}.
\end{eqnarray*}
Finally, as $r \to 1$, Lemma \ref{L:estim} also assures that $C$ can
be replaced by any small positive constant if $\lim_{t \to 0}
h(t)/t^{2+\gamma-q} = 0$.
\end{proof}

\noindent Now, we can apply Theorem \ref{T:caseles12-berg} for the
special function defined in (\ref{E:defh}).

{\bf Case I:} If
\[
\sum_{n=1}^\infty (1-r_n)^{\alpha} (\log \frac{1}{1-r_n}
)^{\alpha_1}\cdots (\log_m \frac{1}{1-r_n} )^{\alpha_m} <\infty,
\]
and if $\gamma \in (-1, \alpha -1)$, then, for any
\[
p > \max\{\, \alpha,\, 2+\gamma-\alpha, \, 1+\gamma/2 \,\},
\]
we have
\[
\int_{0}^{1}\int_{0}^{2\pi} |B'(r\rho e^{i\theta})|^p \,\,\,
\rho(1-\rho^2)^\gamma d\rho \,\, d\theta =
\frac{o(1)}{(1-r)^{\alpha+p-\gamma-2} \, (\log
 \frac{1}{1-r} )^{\alpha_1} \cdots (\log_m \frac{1}{1-r} )^{\alpha_m} },
\]
as $(r \to 1)$. In particular, if
\[
\sum_{n=1}^\infty (1-r_n)^{\alpha}  <\infty,
\]
then
\[
\int_{0}^{1}\int_{0}^{2\pi} |B'(r\rho e^{i\theta})|^p \,\,\,
\rho(1-\rho^2)^\gamma d\rho \,\, d\theta =
\frac{o(1)}{(1-r)^{p+\alpha-\gamma-2}}.
\]

{\bf Case II:}
If
\[
\sum_{n=1}^\infty (1-r_n)^{\alpha} (\log_k \frac{1}{1-r_n}
)^{\alpha_k}\cdots (\log_m \frac{1}{1-r_n} )^{\alpha_m} <\infty,
\]
with  $\alpha_k < 0$, then, for any $p \geq 1$,
\[
\int_{0}^{1}\int_{0}^{2\pi} |B'(r\rho e^{i\theta})|^p \,\,\,
\rho(1-\rho^2)^{\alpha-1} d\rho \,\, d\theta = \frac{o(1)}{
(1-r)^{p-1} \, (\log_k \frac{1}{1-r} )^{\alpha_k} \cdots (\log_m
\frac{1}{1-r} )^{\alpha_m}}, \hspace{1cm} (r \to 1).
\]

{\bf Case III:} If
\[
\sum_{n=1}^\infty (1-r_n)^{\alpha} < \infty,
\]
then, for any $p \geq 1$,
\[
\int_{0}^{1}\int_{0}^{2\pi} |B'(r\rho e^{i\theta})|^p \,\,\,
\rho(1-\rho^2)^{\alpha-1} d\rho \,\, d\theta =
\frac{O(1)}{(1-r)^{p-1}}, \hspace{1cm} (r \to 1).
\]
In particular, for $p=1$,
\[
\int_{0}^{1}\int_{0}^{2\pi} |B'(r\rho e^{i\theta})| \,\,\,
\rho(1-\rho^2)^{\alpha-1} d\rho \,\, d\theta = O(1), \hspace{1cm} (r
\to 1).
\]
which is the Protas' result \cite{Pr73}.

Some other cases can also be considered here. But, since they are
immediate consequence of Theorem \ref{T:caseles12-berg}, we do not
proceed further. Moreover, using similar techniques, one can obtain
estimates for the $A^p_\gamma$  means of the higher derivatives for
a Blaschke product satisfying the hypothesis of Theorem
\ref{T:caseles12-berg}.

\section{Interpolating Blaschke products}
Cohn's theorems \ref{T:cohn} and \ref{T:cohn2} imply that if
$z_n=r_n e^{i\theta_n}$, $n\geq 1$, is a Carleson sequence
satisfying
\[
\sum_{n=1}^\infty (1-r_n)^{1-p} <\infty
\]
for some $p \in (2/3,1)$, then $f' \in H^{2p/(p+2)}$ for all $f \in
K_B$. The following result generalizes this fact.

\begin{Thm} \label{T:modhard}
Let $z_n=r_n e^{i\theta_n}$, $n\geq 1$, be a Carleson sequence
satisfying
\[
\sum_{n=1}^\infty h(1-r_n)<\infty
\]
for a positive continuous function $h$. Let $B$ be the Blaschke
product formed with zeros $z_n$, $n\geq 1$. Suppose that there is
$p\in (2/3,1)$ such that $h(t)/t^{p/2}$ is decreasing and
$h(t)/{t^{1-p}}$ is increasing on $(0,1)$. Then, for all $f \in
K_B$, we have
\[
\left(\int_0^{2\pi}\left|f'(re^{i\theta})\right|^\sigma\,d\theta\right)^{1/\sigma}
\leq \frac {C \, \|f\|_2}{(\, (1-r)^{p-1}h(1-r) \,)^{1/p}},\qquad
(r\to 1),
\]
with $\sigma=2p/(p+2)$ and $C$ an absolute constant.
\end{Thm}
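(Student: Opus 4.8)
The plan is to exploit the special structure of $K_B$ when $(z_n)$ is a Carleson sequence. It is classical (Shapiro--Shields) that for an interpolating sequence the normalized reproducing kernels $k_n(z)=(1-|z_n|^2)^{1/2}/(1-\bar z_n z)$ form a Riesz basis of $K_B$, so every $f\in K_B$ admits an expansion $f=\sum_n a_n k_n$ with $\sum_n|a_n|^2\asymp\|f\|_2^2$. Differentiating termwise and using $|z_n|\le 1$ gives the pointwise bound
\[
|f'(re^{i\theta})|\le \sum_{n=1}^\infty |a_n|\,\frac{(1-|z_n|^2)^{1/2}}{|1-\bar z_n re^{i\theta}|^2}.
\]
This is the only place the interpolation hypothesis enters, and it is the structural heart of the argument.

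Next, since $p\in(2/3,1)$ we have $\sigma=2p/(p+2)\in(1/2,2/3)\subset(0,1)$, so $t\mapsto t^\sigma$ is subadditive and I can move the exponent inside the sum. Because $2\sigma>1$, the stated estimate $\int_0^{2\pi}|1-\rho e^{i\theta}|^{-\nu}\,d\theta\asymp (1-\rho)^{-(\nu-1)}$ applies with $\rho=rr_n$ and $\nu=2\sigma$, yielding
\[
\int_0^{2\pi}|f'(re^{i\theta})|^\sigma\,d\theta \le C\sum_{n=1}^\infty |a_n|^\sigma\,\frac{(1-r_n)^{\sigma/2}}{(1-rr_n)^{2\sigma-1}}.
\]

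The key manoeuvre is a Hölder inequality chosen to decouple $(a_n)$ from the geometric part and to land the residual sum exactly under Lemma~\ref{L:estim}. Applying Hölder with the conjugate exponents $2/\sigma$ and $(p+2)/2$ (conjugate since $\tfrac{\sigma}{2}+\tfrac{2}{p+2}=1$) factors out $(\sum_n|a_n|^2)^{\sigma/2}\asymp\|f\|_2^\sigma$ and leaves
\[
\int_0^{2\pi}|f'(re^{i\theta})|^\sigma\,d\theta \le C\,\|f\|_2^\sigma\Bigg(\sum_{n=1}^\infty \frac{(1-r_n)^{p/2}}{(1-rr_n)^{(3p-2)/2}}\Bigg)^{(2-\sigma)/2},
\]
where the bookkeeping uses $\tfrac{\sigma}{2}\cdot\tfrac{p+2}{2}=\tfrac p2$ and $(2\sigma-1)\tfrac{p+2}{2}=\tfrac{3p-2}{2}$. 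Now the inner sum is precisely of the form handled by Lemma~\ref{L:estim} with exponents $P=p/2$ and $Q=(3p-2)/2$: the required monotonicity of $h(t)/t^{P}=h(t)/t^{p/2}$ (decreasing) and of $h(t)/t^{P-Q}=h(t)/t^{1-p}$ (increasing) are exactly the two hypotheses of the theorem, and $Q-P=p-1$.

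Finally, Lemma~\ref{L:estim} gives the inner sum as $O(1)/((1-r)^{p-1}h(1-r))$, so raising the whole inequality to the power $1/\sigma$ produces the exponent $\tfrac{1}{\sigma}\cdot\tfrac{2-\sigma}{2}=\tfrac{2}{\sigma(p+2)}=\tfrac1p$ on the factor $(1-r)^{p-1}h(1-r)$, using $\sigma(p+2)=2p$. This yields
\[
\Big(\int_0^{2\pi}|f'(re^{i\theta})|^\sigma\,d\theta\Big)^{1/\sigma}\le \frac{C\,\|f\|_2}{((1-r)^{p-1}h(1-r))^{1/p}},
\]
as claimed. I expect the main obstacle to be purely organizational: selecting the Hölder exponent $(p+2)/2$ so that the surviving sum has numerator power $p/2$ and denominator power $(3p-2)/2$, which are exactly the powers for which the theorem's hypotheses on $h$ make Lemma~\ref{L:estim} applicable; beyond that, the argument is the Riesz-basis input followed by routine exponent arithmetic.
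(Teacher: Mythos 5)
Your proposal is correct and follows essentially the same route as the paper: the Riesz basis of normalized reproducing kernels, subadditivity of $t^\sigma$ with $\sigma\in(1/2,1)$, the integral estimate for $|1-\bar z_n re^{i\theta}|^{-2\sigma}$, Hölder with exponents $2/\sigma$ and $(p+2)/2$, and Lemma~\ref{L:estim} applied with exponents $p/2$ and $3p/2-1$. The only cosmetic difference is that the paper works with finite linear combinations and concludes by a density argument, whereas you differentiate the infinite expansion directly; all your exponent arithmetic checks out.
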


\begin{proof}
Since $(z_n)_{n\geq 1}$ is a Carleson sequence, we know that the
functions
\[
f_n(z):=\frac {(1-r_n)^{1/2}}{1-\overline{z}_n \, z},\qquad (n \geq
1),
\]
form a Riesz basis of $K_B$ (see \cite{treatise} for instance). Now, let $f=\displaystyle\sum_{n=1}^N
\beta_n f_n$, $\beta_n \in \C$. Then
\[
f'(z)=\sum_{n=1}^N \frac{\overline{z}_n \, \beta_n
(1-r_n)^{1/2}}{(1-\overline{z}_n \, z)^2},
\]
 and thus we get
\[
|f'(z)|\leq \sum_{n=1}^N
\frac{|\beta_n|(1-r_n)^{1/2}}{|1-\overline{z}_n \, z|^2}.
\]
Since $p\in (2/3,1)$, we have $\sigma\in (1/2,1)$ and we can write
\[
|f'(z)|^\sigma\leq \sum_{n=1}^N
\frac{|\beta_n|^\sigma(1-r_n)^{\sigma/2}}{|1-\overline{z}_n \,
z|^{2\sigma}}.
\]
Therefore,
\begin{eqnarray*}
\int_{0}^{2\pi} \left|f'(re^{i\theta})\right|^\sigma\,d\theta &\leq&
\sum_{n=1}^N |\beta_n|^\sigma (1-r_n)^{\sigma/2}\int_0^{2\pi}\frac
{d\theta}{|1-\overline{z}_n re^{i\theta}|^{2\sigma}}\\
&\leq& c\sum_{n=1}^N |\beta_n|^\sigma \frac
{(1-r_n)^{\sigma/2}}{(1-rr_n)^{2\sigma-1}}.
\end{eqnarray*}
Let $p'=2/\sigma$ and let $q'$ be its conjugate exponent. Then
H\"older's inequality implies that
\[
\int_0^{2\pi}\left|f'(re^{i\theta})\right|^\sigma\,d\theta\leq
c\left(\sum_{n=1}^N |\beta_n|^2 \right)^{1/p'} \left(\sum_{n=1}^N
\frac {(1-r_n)^{\sigma
q'/2}}{(1-rr_n)^{(2\sigma-1)q'}}\right)^{1/q'}.
\]
But since $(f_n)_{n\geq 1}$ forms a Riesz basis of $K_B$, there
exists a constant $c_1>0$ such that
\[
\sum_{n=1}^N |\beta_n|^2 \leq c_1 \|f\|_2^2,
\]
 whence
\[
\int_0^{2\pi}\left|f'(re^{i\theta})\right|^\sigma\,d\theta\leq c_2
\|f\|_2^\sigma  \left(\sum_{n=1}^N \frac {(1-r_n)^{\sigma
q'/2}}{(1-rr_n)^{(2\sigma-1)q'}}\right)^{1/q'}.
\]
Now easy computations show that $q'=\frac {p+2}2$, $\sigma q'=p$,
$(2\sigma-1)q'=3p/2 -1$ and therefore, by Lemma 2.1, we have
\[
\sum_{n=1}^N\frac {(1-r_n)^{\sigma
q'/2}}{(1-rr_n)^{(2\sigma-1)q'}}\leq \frac C{(1-r)^{p-1}h(1-r)},
\]
where $C$ is a constant independent of $N$. We deduce that
\[
\int_0^{2\pi}\left|f'(re^{i\theta})\right|^\sigma\,d\theta\leq \frac
{c_3 \|f\|_2^\sigma} {((1-r)^{p-1}h(1-r))^{1/q'}}.
\]
Since $1/{\sigma q'}=1/p$, and using a density argument, we get that
for all $f\in K_B$,
\[
\left(\int_0^{2\pi}\left|f'(re^{i\theta})\right|^\sigma\,d\theta\right)^{1/\sigma}\leq
\frac {c_3^{1/\sigma} \|f\|_2}{((1-r)^{p-1}h(1-r))^{1/p}}.
\]
\end{proof}

\noindent Now, we can apply Theorem \ref{T:modhard} for the special
function defined in (\ref{E:defh}).

{\bf Case I:} If $z_n=r_n e^{i\theta_n}$, $n\geq 1$, is a Carleson
sequence satisfying
\[
\sum_{n=1}^\infty (1-r_n)^\alpha (\log \frac{1}{1-r_n}
)^{\alpha_1}\cdots (\log_m \frac{1}{1-r_n} )^{\alpha_m} <\infty,
\]
with  $p\in (2/3,1)$, $1-p<\alpha <p/2$, and
$\alpha_1,\dots,\alpha_m \in \R$, then, for all $f\in K_B$, we have
\[
\left(\int_0^{2\pi}\left|f'(re^{i\theta})\right|^\sigma\,d\theta\right)^{1/\sigma}
\leq \frac {C \, \|f\|_2}{\big(\, (1-r)^{\alpha+p-1} (\log
 \frac{1}{1-r} )^{\alpha_1}\cdots (\log_m \frac{1}{1-r} )^{\alpha_m}
\,\big)^{1/p}},
\]
with $\sigma=2p/(p+2)$ and $C$ an absolute constant.

{\bf Case II:} If $z_n=r_n e^{i\theta_n}$, $n\geq 1$, is a Carleson
sequence satisfying
\[
\sum_{n=1}^\infty (1-r_n)^{1-p} (\log_k \frac{1}{1-r_n}
)^{\alpha_k}\cdots (\log_m \frac{1}{1-r_n} )^{\alpha_m} <\infty,
\]
with $p\in (2/3,1)$, $k\geq 1$,
$\alpha_k,\alpha_{k+1},\dots,\alpha_m \in \R$, and $\alpha_k < 0$,
then, for all $f\in K_B$, we have
\[
\left(\int_0^{2\pi}\left|f'(re^{i\theta})\right|^\sigma\,d\theta\right)^{1/\sigma}
\leq \frac {C \, \|f\|_2}{\big(\, (\log_k \frac{1}{1-r}
)^{\alpha_k}\cdots (\log_m \frac{1}{1-r} )^{\alpha_m}
\,\big)^{1/p}},\qquad (r\to 1),
\]
with $\sigma=2p/(p+2)$ and $C$ an absolute constant. However, if
\[
\sum_{n=1}^\infty (1-r_n)^{1-p} <\infty,
\]
then we still have
\[
\left(\int_0^{2\pi}\left|f'(re^{i\theta})\right|^\sigma\,d\theta\right)^{1/\sigma}
\leq C \, \|f\|_2, \qquad (r\to 1),
\]
i.e. $f'\in H^{2p/(p+2)}$, for any $f\in K_B$, which is Cohn's
result.

{\bf Case III:} If $z_n=r_n e^{i\theta_n}$, $n\geq 1$, is a Carleson
sequence satisfying
\[
\sum_{n=1}^\infty (1-r_n)^{p/2} (\log_k \frac{1}{1-r_n}
)^{\alpha_k}\cdots (\log_m \frac{1}{1-r_n} )^{\alpha_m} <\infty,
\]
with $p\in (2/3,1)$, $k\geq 1$,
$\alpha_k,\alpha_{k+1},\dots,\alpha_m \in \R$, and $\alpha_k > 0$,
then, for all $f\in K_B$, we have
\[
\left(\int_0^{2\pi}\left|f'(re^{i\theta})\right|^\sigma\,d\theta\right)^{1/\sigma}
\leq \frac {C \, \|f\|_2}{\big(\, (1-r)^{3p/2-1}\, (\log_k
\frac{1}{1-r} )^{\alpha_k}\cdots (\log_m \frac{1}{1-r} )^{\alpha_m}
\,\big)^{1/p}},
\]
with $\sigma=2p/(p+2)$ and $C$ an absolute constant. However, if
\[
\sum_{n=1}^\infty (1-r_n)^{p/2}  <\infty,
\]
then we still have
\[
\left(\int_0^{2\pi}\left|f'(re^{i\theta})\right|^\sigma\,d\theta\right)^{1/\sigma}
\leq \frac {C \, \|f\|_2}{(1-r)^{(3p-2)/2p} }.
\]

Using similar techniques we can obtain some results about the
$A_\gamma^p$ means of the derivatives of function in the model
subspaces of $H^2$.

\begin{Thm} \label{T:modberg}
Let $z_n=r_n e^{i\theta_n}$, $n\geq 1$, be a Carleson sequence
satisfying
\[
\sum_{n=1}^\infty h(1-r_n)<\infty
\]
for a positive continuous function $h$, and let $B$ be the Blaschke
product formed with zeros $z_n$, $n\geq 1$. Let $p\in (2/3,1)$,
$\sigma=2p/(p+2)$ and $-1 < \gamma < 2\sigma-2$ such that
$h(t)/t^{p/2}$ is decreasing and
$h(t)/{t^{(1-p)+(1+\gamma)(1+p/2)}}$ is increasing on $(0,1)$. Then,
for all $f \in K_B$, we have
\[
\left( \int_{0}^{1}\int_{0}^{2\pi} \, \left|f'(r \, \rho
e^{i\theta})\right|^\sigma\, \, \rho(1-\rho^2)^\gamma d\rho \,
d\theta \right)^{1/\sigma}
\leq
\frac {C \, \|f\|_2}{\big(\, (1-r)^{-(1-p)-(1+\gamma)(1+p/2)} \,
h(1-r) \,\big)^{1/p}}
\]
as $r\to 1^-$.
\end{Thm}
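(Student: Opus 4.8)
The plan is to mirror the proof of Theorem~\ref{T:modhard} step for step, the single change being that the one-dimensional boundary estimate is replaced by the area (Bergman) estimate recorded in the introduction. First I would use that, since $(z_n)_{n\geq 1}$ is a Carleson sequence, the normalized kernels $f_n(z)=(1-r_n)^{1/2}/(1-\overline{z}_n z)$ form a Riesz basis of $K_B$; I would reduce to a finite combination $f=\sum_{n=1}^N\beta_n f_n$ and postpone the passage to general $f\in K_B$ to a density argument at the very end. Differentiating termwise and using that $\sigma=2p/(p+2)\in(1/2,1)$, so that $t\mapsto t^\sigma$ is subadditive, I would obtain exactly as before the pointwise bound
\[
|f'(z)|^\sigma\leq \sum_{n=1}^N \frac{|\beta_n|^\sigma (1-r_n)^{\sigma/2}}{|1-\overline{z}_n z|^{2\sigma}}.
\]

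Next I would integrate this against the weight $\rho(1-\rho^2)^\gamma\,d\rho\,d\theta$ and, for each $n$, apply the second estimate from the introduction with $\nu=2\sigma$ and radius $rr_n$ (after a rotation in $\theta$ absorbing $\theta_n$):
\[
\int_0^1\!\!\int_0^{2\pi}\frac{(1-\rho^2)^\gamma}{|1-rr_n\rho e^{i\theta}|^{2\sigma}}\,\rho\,d\rho\,d\theta\asymp \frac{1}{(1-rr_n)^{2\sigma-\gamma-2}}.
\]
This is precisely where the hypothesis $\gamma<2\sigma-2$ enters, since that estimate requires $\nu-2>\gamma$. The result is
\[
\int_0^1\!\!\int_0^{2\pi}|f'(r\rho e^{i\theta})|^\sigma\rho(1-\rho^2)^\gamma\,d\rho\,d\theta\leq C\sum_{n=1}^N\frac{|\beta_n|^\sigma(1-r_n)^{\sigma/2}}{(1-rr_n)^{2\sigma-\gamma-2}}.
\]

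I would then apply H\"older's inequality with $p'=2/\sigma$ and $q'=(p+2)/2$, splitting off $\big(\sum|\beta_n|^2\big)^{1/p'}\leq (c_1\|f\|_2^2)^{1/p'}=c_1^{1/p'}\|f\|_2^\sigma$ from the Riesz basis bound, and leaving $\big(\sum(1-r_n)^{\sigma q'/2}/(1-rr_n)^{(2\sigma-\gamma-2)q'}\big)^{1/q'}$. The essential bookkeeping, and the step I expect to be the main obstacle, is to verify that Lemma~\ref{L:estim} applies to this sum with numerator exponent $\sigma q'/2=p/2$ and denominator exponent $(2\sigma-\gamma-2)q'$. A direct computation using $\sigma q'=p$ gives that the difference of these exponents equals $-(1-p)-(1+\gamma)(1+p/2)$, so the two monotonicity hypotheses of the Lemma become exactly ``$h(t)/t^{p/2}$ is decreasing'' and ``$h(t)/t^{(1-p)+(1+\gamma)(1+p/2)}$ is increasing'', which are precisely the assumptions of the theorem. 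Lemma~\ref{L:estim} then yields
\[
\sum_{n=1}^N\frac{(1-r_n)^{p/2}}{(1-rr_n)^{(2\sigma-\gamma-2)q'}}\leq \frac{C}{(1-r)^{-(1-p)-(1+\gamma)(1+p/2)}\,h(1-r)},
\]
with $C$ independent of $N$.

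Finally, I would collect the constants, raise the whole inequality to the power $1/\sigma$, and use $\sigma q'=p$, so that the exponent of $(1-r)^{-(1-p)-(1+\gamma)(1+p/2)}h(1-r)$ becomes $1/(\sigma q')=1/p$, producing the stated bound; the density argument then extends it from finite sums to all $f\in K_B$. Apart from the exponent accounting flagged above (and the implicit uniformity in $rr_n$ of the area estimate, handled exactly as in Theorem~\ref{T:modhard}), everything is a routine repetition of the earlier argument.
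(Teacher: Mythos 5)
Your proposal is correct and follows essentially the same route as the paper's proof: reduce to finite sums of the Riesz basis of normalized reproducing kernels, use subadditivity of $t\mapsto t^\sigma$, replace the circular integral estimate by the weighted area estimate (which is exactly where $\gamma<2\sigma-2$ enters), apply H\"older with $p'=2/\sigma$, $q'=(p+2)/2$, and invoke Lemma~\ref{L:estim} with exponents $p/2$ and $(2\sigma-\gamma-2)q'$; your exponent bookkeeping ($\sigma q'=p$ and $q-p/2=-(1-p)-(1+\gamma)(1+p/2)$) agrees with the paper's. No gaps.
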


\begin{proof}
The beginning of the proof is as of Theorem \ref{T:modhard} until
\[
|f'(z)|^\sigma\leq \sum_{n=1}^N
\frac{|\beta_n|^\sigma(1-r_n)^{\sigma/2}}{|1-\overline{z}_n \,
z|^{2\sigma}}.
\]
Therefore, by H\"older's inequality (with $p'=2/\sigma$ and $q'$ its
conjugate exponent) and by Lemma 2.1, we have
\begin{eqnarray*}
\int_{0}^{1}\int_{0}^{2\pi} \, \left|f'(r \, \rho
e^{i\theta})\right|^\sigma\, \, \rho(1-\rho^2)^\gamma d\rho \,
d\theta &\leq& \sum_{n=1}^N |\beta_n|^\sigma (1-r_n)^{\sigma/2}
\int_{0}^{1}\int_{0}^{2\pi}
\frac{\rho(1-\rho^2)^\gamma d\rho \,d\theta}{|1-\overline{z}_n r \rho e^{i\theta}|^{2\sigma}}\\
&\leq& c\sum_{n=1}^N |\beta_n|^\sigma \frac
{(1-r_n)^{\sigma/2}}{(1-rr_n)^{2\sigma-\gamma-2}}\\
&\leq&
c\left(\sum_{n=1}^N |\beta_n|^2 \right)^{1/p'} \left(\sum_{n=1}^N
\frac {(1-r_n)^{\sigma
q'/2}}{(1-rr_n)^{(2\sigma-\gamma-2)q'}}\right)^{1/q'}\\
&\leq& c' \, \|f\|_2^\sigma  \left(\sum_{n=1}^N \frac
{(1-r_n)^{\sigma
q'/2}}{(1-rr_n)^{(2\sigma-\gamma-2)q'}}\right)^{1/q'}\\
&\leq& \frac {c'' \,  \|f\|_2^\sigma} {\big(
(1-r)^{-(1-p)-(1+\gamma)(1+p/2)} \, h(1-r) \big)^{1/q'}}.
\end{eqnarray*}
\end{proof}

\noindent Now, we can apply Theorem \ref{T:modberg} for the special
function defined in (\ref{E:defh}).

{\bf Case I:} If $z_n=r_n e^{i\theta_n}$, $n\geq 1$, is a Carleson
sequence satisfying
\[
\sum_{n=1}^\infty (1-r_n)^{\alpha} \, (\log \frac{1}{1-r_n}
)^{\alpha_1}\cdots (\log_m \frac{1}{1-r_n} )^{\alpha_m} <\infty,
\]
with $p\in (2/3,1)$, $\sigma=2p/(p+2)$, $-1 < \gamma < 2\sigma-2$,
and $(1-p)+(1+\gamma)(1+p/2) < \alpha < p/2$, then, for all $f\in
K_B$, we have
\begin{eqnarray*}
& & \left( \int_{0}^{1}\int_{0}^{2\pi} \, \left|f'(r \, \rho
e^{i\theta})\right|^\sigma\, \, \rho(1-\rho^2)^\gamma d\rho \,
d\theta \right)^{1/\sigma}\\
&\leq&
\frac {C \, \|f\|_2}{\big(\, (1-r)^{\alpha-(1-p)-(1+\gamma)(1+p/2)}
\,\, (\log \frac{1}{1-r} )^{\alpha_1}\cdots (\log_m \frac{1}{1-r}
)^{\alpha_m} \,\big)^{1/p}}
\end{eqnarray*}
as $r\to 1^-$.

{\bf Case II:} If $z_n=r_n e^{i\theta_n}$, $n\geq 1$, is a Carleson
sequence satisfying
\[
\sum_{n=1}^\infty (1-r_n)^{(1-p)+(1+\gamma)(1+p/2)} \, (\log_k
\frac{1}{1-r_n} )^{\alpha_k}\cdots (\log_m \frac{1}{1-r_n}
)^{\alpha_m} <\infty,
\]
with $p\in (2/3,1)$, $\sigma=2p/(p+2)$, $-1 < \gamma < 2\sigma-2$,
  $\alpha_k,\alpha_{k+1},\dots,\alpha_m \in \R$, and $\alpha_k <
0$, then, for all $f\in K_B$, we have
\[
\left( \int_{0}^{1}\int_{0}^{2\pi} \, \left|f'(r \, \rho
e^{i\theta})\right|^\sigma\, \, \rho(1-\rho^2)^\gamma d\rho \,
d\theta \right)^{1/\sigma}
\leq
\frac {C \, \|f\|_2}{\big(\, (\log_k \frac{1}{1-r}
)^{\alpha_k}\cdots (\log_m \frac{1}{1-r} )^{\alpha_m} \,\big)^{1/p}}
\]
as $r\to 1^-$. However, if
\[
\sum_{n=1}^\infty (1-r_n)^{(1-p)+(1+\gamma)(1+p/2)}<\infty,
\]
then,  we still have
\[
\left( \int_{0}^{1}\int_{0}^{2\pi} \, \left|f'(r \, \rho
e^{i\theta})\right|^\sigma\, \, \rho(1-\rho^2)^\gamma d\rho \,
d\theta \right)^{1/\sigma}
\leq
C \, \|f\|_2,
\]
which means that
\[
f' \in A_\gamma^{2p/(p+2)},
\]
and the differential operator
\[
\begin{array}{ccl}
K_B & \longrightarrow & A_\gamma^{2p/(p+2)},\\
f & \longmapsto & f',
\end{array}
\]
is bounded.

{\bf Case III:} If $z_n=r_n e^{i\theta_n}$, $n\geq 1$, is a Carleson
sequence satisfying
\[
\sum_{n=1}^\infty (1-r_n)^{p/2} \, (\log_k \frac{1}{1-r_n}
)^{\alpha_k}\cdots (\log_m \frac{1}{1-r_n} )^{\alpha_m} <\infty,
\]
with $p\in (2/3,1)$, $\sigma=2p/(p+2)$, $-1 < \gamma < 2\sigma-2$,
 $\alpha_k,\alpha_{k+1},\dots,\alpha_m \in \R$ and $\alpha_k >
0$, then, for all $f\in K_B$, we have
\begin{eqnarray*}
& & \left( \int_{0}^{1}\int_{0}^{2\pi} \, \left|f'(r \, \rho
e^{i\theta})\right|^\sigma\, \, \rho(1-\rho^2)^\gamma d\rho \,
d\theta \right)^{1/\sigma}\\
&\leq&
\frac {C \, \|f\|_2}{\big(\, (1-r)^{(3p/2-1)-(1+\gamma)(1+p/2)} \,\,
(\log_k \frac{1}{1-r} )^{\alpha_k}\cdots (\log_m \frac{1}{1-r}
)^{\alpha_m} \,\big)^{1/p}}
\end{eqnarray*}
as $r\to 1^-$. However, if
\[
\sum_{n=1}^\infty (1-r_n)^{p/2} <\infty,
\]
then,  we still have
\[
\left( \int_{0}^{1}\int_{0}^{2\pi} \, \left|f'(r \, \rho
e^{i\theta})\right|^\sigma\, \, \rho(1-\rho^2)^\gamma d\rho \,
d\theta \right)^{1/\sigma}
\leq
\frac {C \, \|f\|_2}{\,\,\,
(1-r)^{\big(\,(3p/2-1)-(1+\gamma)(1+p/2)\,\big)/p} }
\]
as $r\to 1^-$.

\end{document}